\newtheorem{thm}{Theorem}
\numberwithin{thm}{section}
\newtheorem{lem}[thm]{Lemma}
\newtheorem{prop}[thm]{Proposition}
\newtheorem{cor}[thm]{Corollary}
\newtheorem{rema}[thm]{Remark}
\newtheorem{que}[thm]{Question}
\newtheorem{defi}[thm]{Definition}
\newtheorem*{thm2}{Theorem}
\newtheorem*{cor2}{Corollary}
\newtheorem*{que2}{Question}
\begin{document}
	\begin{center}
		\huge{Rational points on symmetric powers and categorical representability}\\[1cm]
	\end{center}
	
	\begin{center}
		
		\large{Sa$\mathrm{\check{s}}$a Novakovi$\mathrm{\acute{c}}$}\\[0,4cm]
		{\small December 2019}\\[0,3cm]
	\end{center}

	\noindent{\small \textbf{Abstract}. 
		In this paper we observe that for geometrically integral projective varieties $X$, admitting a full weak exceptional collection consisting of pure vector bundles, the existence of a $k$-rational point implies $\mathrm{rdim}(X)=0$. We also study the symmetric power $S^n(X)$ of Brauer--Severi and involution varieties over $\mathbb{R}$ and prove that the equivariant derived category $D^b_{S_n}(X^n)$ admits a full weak exceptional collection. As a consequence, we find $\mathrm{rdim}(X)=0$ if and only if $\mathrm{rdim}(D^b_{S_n}(X^n))=0$ for $1\leq n\leq 3$. If $X$ is Brauer--Severi, the existence of a $\mathbb{R}$-rational point on $X$ or $S^3(X)$ is equivalent to $\mathrm{rdim}(D^b_{S_3}(X^3))=0$. \\
		
		\section{Introduction}
When the base field $k$ is not algebraically closed, the existence of $k$-rational points on $X$ (being a necessary condition for rationality) is a major open problem in arithmetic geometry. In \cite{ABT} Auel and Bernardara formulated the following question, actually posed by H. Esnault in 2009:
\begin{que}
	Let $X$ be a smooth projective variety over a field $k$. Can the bounded derived category $D^b(X)$ detect the existence of a $k$-rational point on $X$?
\end{que}
This question is now central for arithmetic aspects of the theory of derived categories, see \cite{ABT}, \cite{ANT}, \cite{ANTT}, \cite{ASCHT}, \cite{ABBT}, \cite{ABB1T}, \cite{HTT}, \cite{LOT} \cite{NO2T} and \cite{NO3T}.
In the present work we use a potential measure for rationality to answer Question 1.1 affirmatively in some special cases. In \cite{BBT} Bernardara and Bolognesi introduced the notion of categorical representability. Below we use the definition given in \cite{AB1T}. So a $k$-linear triangulated category $\mathcal{T}$ is said to be \emph{representable in dimension $m$} if there is a semiorthogonal decomposition $\mathcal{T}=\langle \mathcal{A}_1,...,\mathcal{A}_n\rangle$ and for each $i=1,...,n$ there exists a smooth projective connected variety $Y_i$ with $\mathrm{dim}(Y_i)\leq m$, such that $\mathcal{A}_i$ is equivalent to an admissible subcategory of $D^b(Y_i)$ (see \cite{AB1T}). We use the following notation
\begin{eqnarray*}
	\mathrm{rdim}(\mathcal{T}):=\mathrm{min}\{m\mid \mathcal{T}\  \textnormal{is representable in dimension m}\},
\end{eqnarray*}
whenever such a finite $m$ exists. Let $X$ be a smooth projective $k$-variety. One says $X$ is \emph{representable in dimension} $m$ if $D^b(X)$ is representable in dimension $m$. We will use the following notation:
\begin{eqnarray*}
	\mathrm{rdim}(X):=\mathrm{rdim}(D^b(X)).
\end{eqnarray*}
Quite recently, it has been shown that certain varieties $X$ admit $k$-rational points if and only if $\mathrm{rdim}(X)=0$, see \cite{ABT}, \cite{NO2T} and \cite{NO3T}. Among these varieties are certain Fano varieties having full weak exceptional collections. For arbitrary varieties admitting full weak exceptional collections, we observe the following:
\begin{thm2}[Theorem 6.3]
	Let $X$ be a smooth projective variety over a field $k$ satisfying $H^0(X^s,\mathcal{O}_{X^s})=k^s$ and assume $D^b(X)$ admits a full weak exceptional collection consisting of pure vector bundles. If $X(k)\neq \emptyset$, then $\mathrm{rdim}(X)=0$.
\end{thm2}
Recall that a Fano variety $X$ of dimension 1 is a Brauer--Severi curve. In this case $D^b(X)$ has a full weak exceptional collection consisting of pure vector bundles. Results in \cite{NO2T} show that for finite products $Y$ of Brauer--Severi varieties, $Y(k)\neq \emptyset$ if and only if $\mathrm{rdim}(Y)=0$. Del Pezzo surfaces $S$ admitting a full weak exceptional collection consisting of pure vector bundles or, more generally, admitting a semiorthogonal decomposition of the form
\begin{eqnarray*}
	D^b(S)=\langle D^b(l_1/k,A_1),...,D^b(l_n/k,A_n)\rangle,
\end{eqnarray*}
where $l_i/k$ are field extensions and $A_i$ suitable central simple algebras over $l_i$ must be of degree $d\geq 5$ (see \cite{ABT}). In any of these cases, Auel and Bernardara \cite{ABT} proved $S(k)\neq \emptyset$ if and only if $\mathrm{rdim}(S)=0$. Note that there are also Fano threefolds $X$, such as Brauer--Severi varieties or twisted forms of quadrics which admit full weak exceptional collections, respectively semiorthogonal decompositions of the form
\begin{eqnarray*}
	D^b(X)=\langle D^b(l_1/k,A_1),...,D^b(l_n/k,A_n)\rangle.
\end{eqnarray*}
The results in \cite{NO2T} show that $X(k)\neq \emptyset$ if and only if $\mathrm{rdim}(X)=0$. On the other hand, with the help of \cite{BTT}, Proposition 1.7 one can cook up examples of anisotropical quadrics admitting full exceptional collections which are not rational and have no $k$-rational points. So there are examples of Fano varieties $X$ for which $\mathrm{rdim}(X)=0$ does not imply the existence of a $k$-rational point. Motivated by these examples, we want to ask the following question.
\begin{que}
	Let $X$ be a smooth Fano variety over a field $k$. Suppose $D^b(X)$ admits a full weak exceptional collection consisting of pure vector bundle, or more generally a semiorthogonal decomposition of the form
	\begin{eqnarray*}
		D^b(X)=\langle D^b(l_1/k,A_1),...,D^b(l_n/k,A_n)\rangle,
	\end{eqnarray*}
	where $l_i/k$ are field extensions and $A_i$ suitable central simple algebras over $l_i$. Is there any characterization for which $X$, $\mathrm{rdim}(X)=0$ implies $X(k)\neq \emptyset$?
\end{que}
The main goal of the present paper is to modify Question 1.1 by studying singular projective varieties. However, one has to seek for the "right" analogue of the derived category $D^b(X)$. In Section 6 we study symmetric powers of Brauer--Severi varieties $X$ over $\mathbb{R}$ and its associated equivariant derived category $D^b_{S_n}(X^n)$. This consideration is motivated by a paper of Krashen and Saltman \cite{KST} in which they studied the question whether for Brauer--Severi varieties $X$ the rationality of the symmetric power $S^n(X)$ implies rationality of $X$. In this context we also want to mention a paper of Koll\'ar \cite{KOLT} where products of symmetric powers of a Brauer--Severi variety are classified up-to stable birational equivalence. In the present work we want to shed light on the existence of rational points on $S^n(X)$ from a derived point of view by using the concept of categorical representability. Notice that in some cases the existence of a rational point on $S^n(X)$ forces $X$ to be rational and we can relate $\mathrm{rdim}(X)$ to $\mathrm{rdim}(D^b_{S_n}(X^n))$. We first show the following result which is not a surprise and follows from available results and techniques in the literature. We focus on Brauer--Severi varieties and involution vartieies of orthogonal type over $\mathbb{R}$ since we make use of the endomorphism algebras of the irreducible representations of the symmetric group $S_n$. At some point in the proof we need the endomorphism algebras to be central simple or matrix algebras over a field. This is quite hard to accomplish for Brauer--Severi and involution varieties over arbitrary fields $k$. 
\begin{thm2}[Theorem 6.5]
	Let $X$ be a Brauer--Severi variety over $\mathbb{R}$ or a twisted quadric associated to a central simple $\mathbb{R}$-algebra $(A,\sigma)$ with involution of orthogonal type. Then $D^b_{S_n}(X^n)$ admits a full weak exceptional collection.
\end{thm2}
\noindent
With the help of Theorem 6.5, we can prove the following:
\begin{thm2}[Theorem 6.7]
	Let $X$ be a Brauer--Severi variety over $\mathbb{R}$ or a twisted quadric associated to a central simple $\mathbb{R}$-algebra $(A,\sigma)$ with involution of orthogonal type and $1\leq n\leq 3$. We set $\mathcal{T}:=D^b_{S_n}(X^n)$. Then the following hold:
	\begin{itemize}
		\item[(\textbf{i})] $\mathrm{rdim}(X)=0$ if and only if $\mathrm{rdim}(\mathcal{T})=0$,
		\item[(\textbf{ii})] $X(\mathbb{R})\neq \emptyset$ if and only if $\mathrm{rdim}(\mathcal{T})=0$.
	\end{itemize}
\end{thm2}
The proof of Theorem 6.7 in particular shows that the implication $\mathrm{rdim}(\mathcal{T})=0 \Rightarrow \mathrm{rdim}(X)=0$ holds for arbitrary positive integers $n$. As a consequence of the latter result we find:
\begin{cor}[Corollary 6.9]
	Let $X$ be a Brauer--Severi variety over $\mathbb{R}$ corresponding to $A$ and assume $\mathrm{deg}(A)>3$. Then $S^3(X)(\mathbb{R})\neq \emptyset$ if and only if $\mathrm{rdim}(\mathcal{T})=0$.
\end{cor}  
The existence of rational points seems, in general, not to be related to categorical representability in dimension zero. For instance, an elliptic curve over a number field is categorical representable in dimension one (see \cite{AB1T}) although it has rational points. Indeed, the rationality of a given variety $X$ seems to be related to categorical representability in codimension 2. The definition of categorical representability from above works perfectly for smooth varieties. For singular varieties however, one has to involve so called noncommutative resolutions of singularities. We recall the following definition from \cite{MBTT}: let $X$ be a projective, possibly singular, variety over $k$ and a $\mathcal{A}$ a noncommutative resolution of singularities in the sense of \cite{MBTT} (see Section 4 for the definition). $X$ is said to be \emph{representable in dimension $m$} if there is a semiorthogonal decomposition $\mathcal{A}=\langle \mathcal{A}_1,...,\mathcal{A}_n\rangle$ and for each $i=1,...,n$ there exists a smooth projective connected variety $Y_i$ with $\mathrm{dim}(Y_i)\leq m$, such that $\mathcal{A}_i$ is equivalent to an admissible subcategory of $D^b(Y_i)$ (see \cite{MBTT}). We use the following notations
\begin{center}
	\begin{tabular}{c c}
		$\mathrm{rdim}(X):=\mathrm{min}\{m\mid X \  \textnormal{is representable in dimension m}\}$,\\
		\\
		$\mathrm{rcodim}(X):=\mathrm{dim}(X) - \mathrm{rdim}(X)$.
	\end{tabular}
\end{center}
\begin{que2}[\cite{MBTT}]
	Let $X$ be a projective variety over $k$ of dimension at least 2. Suppose $X$ is $k$-rational. Do we have $\mathrm{rcodim}(X)\geq 2$ ?
\end{que2}
\noindent
Assuming $\mathrm{dim}(X)\geq 2$, Theorem 6.3 in particular implies that if $X$ is $k$-rational, then $\mathrm{rcodim}(X)\geq 2$. Moreover, in view of the latter question, we show that the dg enhancement of $D^b_{S_n}(X^n)$ is a noncommutative resolution of singularities of the noncommutative scheme $\mathrm{perf}(S^3(X))$ (see Theorem 6.14). Then Corollary 6.9 from above says: 
\begin{cor2}[Corollary 6.15]
	Let $X$ be a Brauer--Severi variety over $\mathbb{R}$ corresponding to $A$ and assume $\mathrm{deg}(A)>3$. If $S^3(X)$ is $\mathbb{R}$-rational, we have $\mathrm{rcodim}(S^3(X))\geq 2$.
\end{cor2}

{\small \textbf{Acknowledgement}. I wish to thank Zinovy Reichstein for answering questions and providing me with literature for group actions on central simple algebras. I also like to thank Asher Auel, Marcello Bernardara and Michele Bolognesi, whose articles inspired me to deal with this subject. Finally, I would like to thank the Heinrich--Heine--University for financial support via the SFF-grant.}\\

{\small \textbf{Notations}. If $X$ is a $k$-variety, we will denote by $D^b(X)$ the bounded derived category of complexes of coherent sheaves on $X$. Notice that $D^b(X)$ is a $k$-linear category. Let $B$ be an $\mathcal{O}_X$-algebra, we will denote by $D^b(X,B)$ the bounded derived category of complexes of $B$-modules, considered as a $k$-linear category. For $X=\mathrm{Spec}(K)$ and $B$ associated to a $K$-algebra, we will write $D^b(K/k,B)$. Also $D^b(K,B)$ is shorthanded for $D^b(K/K,B)$.} 

\section{Brauer--Severi and involution varieties}
A \emph{Brauer--Severi variety} of dimension $n$ is a $k$-variety $X$ such that $X\otimes_k L\simeq \mathbb{P}^n$ for a finite field extension $L/k$. An extension $L/k$ for which $X\otimes_k L\simeq \mathbb{P}^n$ is called \emph{splitting field} of $X$. In fact, every Severi--Brauer variety always splits over a finite separable field extension of $k$ (see \cite{GST}, Corollary 5.1.4) and therefore over a finite Galois extension. It follows from descent theory that $X$ is projective, integral and smooth over $k$. Via Galois cohomology, isomorphism classes of $n$-dimensional Severi--Brauer varieties are in one-to-one correspondence with isomorphism classes of central simple $k$-algebras of degree $n+1$. If $A$ is a central simple algebra, we will write $BS(A)$ for the corresponding Brauer--Severi variety.

Recall, a finite-dimensional $k$-algebra $A$ is called \emph{central simple} if it is an associative $k$-algebra that has no two-sided ideals other than $0$ and $A$ and if its center equals $k$. Note that $A$ is a central simple if and only if there is a finite field extension $L/k$, such that $A\otimes_k L \simeq M_n(L)$ (see \cite{GST}, Theorem 2.2.1). 
An extension $L/k$ such that $A\otimes_k L\simeq M_n(L)$ is called \emph{splitting field} for $A$. The \emph{degree} of a central simple algebra $A$ is defined to be $\mathrm{deg}(A):=\sqrt{\mathrm{dim}_k A}$. According to the \emph{Wedderburn Theorem} for any central simple $k$-algebra $A$ there is an integer $n>0$ and a division algebra $D$, such that $A\simeq M_n(D)$. The division algebra $D$ is also central and unique up to isomorphism. The degree of the unique central division algebra $D$ is called the \emph{index} of $A$ and is denoted by $\mathrm{ind}(A)$. 

Two central simple $k$-algebras $A\simeq M_n(D)$ and $B\simeq M_m(D')$ are called \emph{Brauer-equivalent} if $D\simeq D'$. The \emph{Brauer group} $\mathrm{Br}(k)$ of a field $k$ is the group whose elements are equivalence classes of central simple $k$-algebras, with addition given by the tensor product of algebras. It is an abelian group with inverse of the equivalence class $[A]$ being $[A^{op}]$. The neutral element is $[k]$. It is a fact that the Brauer group of any field is a torsion group. The order of an equivalence class $[A]\in \mathrm{Br}(k)$ is called the \emph{period} of $[A]$ and is denoted by $\mathrm{per}(A)$. For instance if $k=\mathbb{R}$, the Brauer group $\mathrm{Br}(\mathbb{R})$ is cyclic of order two and generated by the equivalence class of the Hamiltion quaternions $\mathbb{H}$.

To a central simple algebra $A$ of degree $n$ with involution $\sigma$ of the first kind over a field $k$ of $\mathrm{char}(k)\neq 2$ one can associate the \emph{involution variety} $\mathrm{IV}(A,\sigma)$. This variety can be described as the variety of $n$-dimensional right ideals $I$ of $A$ such that $\sigma(I)\cdot I=0$. If $A$ is split so $(A,\sigma)\simeq (M_n(k), q^*)$, where $q^*$ is the adjoint involution defined by a quadratic form $q$ one has $\mathrm{IV}(A,\sigma)\simeq V(q)\subset \mathbb{P}^{n-1}_k$. Here $V(q)$ is the quadric determined by $q$. By construction such an involution variety  $\mathrm{IV}(A,\sigma)$ becomes a quadric in $\mathbb{P}^{n-1}_L$ after base change to some splitting field $L$ of $A$. In this way the involution variety is a twisted form of a smooth quadric. Recall from \cite{DTT} that a splitting field $L$ splits $A$ \emph{isotropically} if $(A,\sigma)\otimes_k L\simeq (M_n(L), q^*)$ with $q$ an isotropic quadratic form over $L$. Although the degree of $A$ is arbitrary, (when $\mathrm{char}(k)\neq 2$), the case where degree of $A$ is odd does not give anything new, since central simple algebras of odd degree with involution of the first kind are split (see \cite{KMRT}, Corollary 2.8). For details on the construction and further properties on involution varieties and the corresponding algebras we refer to \cite{DTT}.


\section{Semiorthogonal decomposition and exceptional collections}
Let $\mathcal{D}$ be a triangulated category and $\mathcal{A}$ a triangulated subcategory. The subcategory $\mathcal{A}$ is called \emph{thick} if it is closed under isomorphisms and direct summands. For a subset $M$ of objects of $\mathcal{D}$ we denote by $\langle M\rangle$ the smallest full thick subcategory of $\mathcal{D}$ containing the elements of $M$.
Furthermore, we define $M^{\perp}$ to be the subcategory of $\mathcal{D}$ consisting of all objects $C$ such that $\mathrm{Hom}_{\mathcal{D}}(E[i],C)=0$ for all $i\in \mathbb{Z}$ and all elements $E$ of $M$. We say that $M$ \emph{generates} $\mathcal{D}$ if $M^{\perp}=0$. Now assume $\mathcal{D}$ admits arbitrary direct sums. An object $F$ is called \emph{compact} if $\mathrm{Hom}_{\mathcal{D}}(F,-)$ commutes with direct sums. Denoting by $\mathcal{D}^c$ the subcategory of compact objects we say that $\mathcal{D}$ is \emph{compactly generated} if the objects of $\mathcal{D}^c$ generate $\mathcal{D}$. Let $\mathcal{D}$ be a compactly generated triangulated category. Then a set of objects $A\subset \mathcal{D}^c$ generates $\mathcal{D}$ if and only if $\langle A\rangle=\mathcal{D}^c$ (see \cite{BVT}, Theorem 2.1.2).

Let $G$ be a finite group acting on a smooth projective variety $X$ over a field $k$ and assume $\mathrm{char}(k)\nmid\mathrm{ord}(G)$. The \emph{equivariant derived category}, denoted by $D^b_G(X)$, is defined to be $D^b(\mathrm{Coh}_G(X))$. For details see for instance \cite{NO1T}, Section 2. For any two objects $\mathcal{F}^{\bullet}, \mathcal{G}^{\bullet}\in D^b_G(X)$ we write $\mathrm{Hom}_G(\mathcal{F}^{\bullet},\mathcal{G}^{\bullet}):=\mathrm{Hom}_{D^b_G(X)}(\mathcal{F}^{\bullet},\mathcal{G}^{\bullet})$. As $\mathrm{char}(k)\nmid\mathrm{ord}(G)$, the functor $(-)^G$ is exact and one has
\begin{center}
	$\mathrm{Hom}_{G}(\mathcal{F}^{\bullet},\mathcal{G}^{\bullet}[i])\simeq \mathrm{Hom}(\mathcal{F}^{\bullet},\mathcal{G}^{\bullet}[i])^G$.
\end{center}
Recall, that for every subgroup $H\subset G$, the restriction functor $\mathrm{Res}\colon D^b_G(X)\rightarrow D^b_H(X)$ has the inflation functor $\mathrm{Inf}\colon D^b_H(X)\rightarrow D^b_G(X)$ as a left and right adjoint, see \cite{ET}, Section 3. It is given for $\mathcal{A}\in D^b_H(X)$ by
\begin{eqnarray*}
	\mathrm{Inf}^G_H(\mathcal{A})=\bigoplus_{[g]\in H\setminus G} g^*\mathcal{A}.
\end{eqnarray*}

\begin{defi}
	\textnormal{Let $A$ be a division algebra over $k$, not necessarily central. An object $\mathcal{E}^{\bullet}\in D^b_G(X)$ is called \emph{weak exceptional} if $\mathrm{End}_G(\mathcal{E}^{\bullet})=A$ and $\mathrm{Hom}_G(\mathcal{E}^{\bullet},\mathcal{E}^{\bullet}[r])=0$ for $r\neq 0$. If $A=k$ the object is called \emph{exceptional}.}
\end{defi}
\begin{defi}
	\textnormal{A totally ordered set $\{\mathcal{E}^{\bullet}_1,...,\mathcal{E}^{\bullet}_n\}$ of weak exceptional objects in $D^b_G(X)$ is called an \emph{weak exceptional collection} if $\mathrm{Hom}_G(\mathcal{E}^{\bullet}_i,\mathcal{E}^{\bullet}_j[r])=0$ for all integers $r$ whenever $i>j$. An weak exceptional collection is \emph{full} if $\langle\{\mathcal{E}^{\bullet}_1,...,\mathcal{E}^{\bullet}_n\}\rangle=D^b_G(X)$ and \emph{strong} if $\mathrm{Hom}_G(\mathcal{E}^{\bullet}_i,\mathcal{E}^{\bullet}_j[r])=0$ whenever $r\neq 0$. If the set $\{\mathcal{E}^{\bullet}_1,...,\mathcal{E}^{\bullet}_n\}$ consists of exceptional objects it is called \emph{exceptional collection}}. 
\end{defi}
A generalization of the notion of a full weak exceptional collection is that of a semiorthogonal decomposition of $D^b_G(X)$. Recall that a full triangulated subcategory $\mathcal{A}$ of $D^b_G(X)$ is called \emph{admissible} if the inclusion $\mathcal{D}\hookrightarrow D^b_G(X)$ has a left and right adjoint functor.
\begin{defi}
	\textnormal{Let $X$ be a smooth projective variety over $k$. A sequence $\mathcal{A}_1,...,\mathcal{A}_n$ of full triangulated subcategories of $D^b_G(X)$ is called \emph{semiorthogonal} if all $\mathcal{A}_i\subset D^b_G(X)$ are admissible and $\mathcal{A}_j\subset \mathcal{A}_i^{\perp}=\{\mathcal{F}^{\bullet}\in D^b_G(X)\mid \mathrm{Hom}_G(\mathcal{G}^{\bullet},\mathcal{F}^{\bullet})=0$, $\forall$ $ \mathcal{G}^{\bullet}\in\mathcal{A}_i\}$ for $i>j$. Such a sequence defines a \emph{semiorthogonal decomposition} of $D^b_G(X)$ if the smallest full thick subcategory containing all $\mathcal{A}_i$ equals $D^b_G(X)$.}
\end{defi}
\noindent
For a semiorthogonal decomposition we write $D^b_G(X)=\langle \mathcal{A}_1,...,\mathcal{A}_n\rangle$.

\section{Descent for vector bundles}
Let $X$ be a proper $k$-variety and $\mathcal{W}$ an indecomposable vector bundle on $X\otimes_ k k^s$. A vector bundle $\mathcal{V}$ on $X$ is called \emph{pure of type $\mathcal{W}$} if $\mathcal{V}\otimes_k k^s\simeq \mathcal{W}^{\oplus m}$ (see \cite{AET}). We say $\mathcal{V}$ is \emph{pure} if it is pure of type $\mathcal{L}$ for a line bundle $\mathcal{L}$. Note that $\mathcal{L}$ is $G_s:=\mathrm{Gal}(k^s|k)$ invariant. Recall the Brauer obstruction for  invariant line bundles on smooth proper geometrically integral $k$-varieties $X$. The sequence of low degree terms of the Leray spectral sequence is  
\begin{eqnarray*}
	0\rightarrow \mathrm{Pic}(X)\rightarrow \mathrm{Pic}(X_{k^s})^{G_s}\stackrel{d}{\rightarrow} \mathrm{Br}(X)\rightarrow \mathrm{Br}(X).
\end{eqnarray*}
It is well-known that for proper varieties over $k$ one has $\mathrm{Pic}(X_{k^s})^{G_s}\simeq \mathrm{Pic}_{(X/k)(et)}(k)$. Recall from \cite{LIT} that a field extension $L/k$ is called \emph{splitting field} for $\mathcal{L}\in \mathrm{Pic}_{(X/k)(et)}(k)$ if $\mathcal{L}\otimes_k L$ lies $\mathrm{Pic}(X\otimes_k L)$. Moreover, the class $\mathcal{L}$ is called \emph{globally generated} if there is a splitting field $K$ of $\mathcal{L}$ such that $\mathcal{L}\otimes_k L$ is globally generated. The following result will be applied in Section 6.
\begin{thm}[\cite{LIT}, Theorem 1.1]
	Let $X$ be a proper variety over a field $k$ and $P$ a Brauer--Severi variety. Then there exists a morphism $\phi\colon X\rightarrow P$ if and only if there is a globally generated $\mathcal{L}\in \mathrm{Pic}_{(X/k)(et)}(k)$ with $d(\mathcal{L})=[P]\in \mathrm{Br}(k)$.
\end{thm}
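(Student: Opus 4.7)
The plan is to prove both implications by combining Galois descent with the functoriality of the Brauer obstruction $d$ appearing in the Leray spectral sequence displayed just before the theorem.

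For the "only if" direction, suppose a morphism $\phi\colon X\rightarrow P$ is given. Over $k^s$ one has $P_{k^s}\simeq \mathbb{P}^n_{k^s}$, and the tautological generator $\mathcal{O}_P(1)\in \mathrm{Pic}(P_{k^s})^{G_s}$ gives a class in $\mathrm{Pic}_{(P/k)(et)}(k)$. The defining property of Brauer--Severi varieties — equivalently, the cocycle description of the Leray differential in low degree — yields $d(\mathcal{O}_P(1))=[P]\in \mathrm{Br}(k)$, and $\mathcal{O}_P(1)$ is globally generated with splitting field $k^s$. I would set $\mathcal{L}:=\phi^*\mathcal{O}_P(1)$; naturality of the Leray spectral sequence with respect to $\phi$ then gives $d(\mathcal{L})=[P]$, and the pullbacks of generating sections of $\mathcal{O}_P(1)$ exhibit $\mathcal{L}$ as globally generated after passing to a common splitting field of $X$ and $P$.

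For the converse, start with a globally generated $\mathcal{L}\in \mathrm{Pic}_{(X/k)(et)}(k)$ with $d(\mathcal{L})=[P]$, and pick a Galois splitting field $L/k$ for both $P$ and $\mathcal{L}$. Over $L$ the class $\mathcal{L}$ is represented by an honest line bundle $\mathcal{L}_L$ on $X_L$ which is globally generated, so there is a generating subspace $V\subseteq H^0(X_L,\mathcal{L}_L)$ and a morphism $\phi_L\colon X_L\rightarrow \mathbb{P}(V^\vee)$. The Galois action on the pair $(\mathcal{L}_L, V)$ is well-defined only up to scalar automorphisms of $\mathcal{L}_L$, and hence yields a $1$-cocycle valued in $\mathrm{PGL}(V)$. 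The crux is to identify the associated Brauer class in $H^2(G_s,\mathbb{G}_m)=\mathrm{Br}(k)$ with $d(\mathcal{L})$; this comes out of unwinding the Leray differential via the short exact sequence $1\to \mathbb{G}_m\to \mathrm{GL}(V)\to \mathrm{PGL}(V)\to 1$. Once matched, the projective bundle $\mathbb{P}(V^\vee)$ descends to a Brauer--Severi variety over $k$ of class $[P]$, which is therefore isomorphic to $P$, and $\phi_L$ descends to the desired $\phi\colon X\rightarrow P$.

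The main obstacle I expect is precisely this descent step for the morphism: one must choose $V$ in a Galois-stable way (up to $\mathbb{G}_m$-ambiguity), and then compare the resulting projective $1$-cocycle with the cocycle that defines $d(\mathcal{L})$. After this identification, uniqueness of the Brauer--Severi variety attached to a Brauer class together with standard effectivity of descent for morphisms to a fixed target completes the argument. The "only if" direction, in contrast, is essentially formal once one knows the tautological computation $d(\mathcal{O}_P(1))=[P]$.
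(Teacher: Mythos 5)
This statement is not proved in the paper; it is imported verbatim from Liedtke's article, cited as \cite{LI}, Theorem~1.1, and no argument is given in the present text. So there is nothing in the paper to compare your sketch against, and I will assess it on its own terms.

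Your ``only if'' direction is essentially correct and standard: pulling back the tautological class $\mathcal{O}_P(1)\in\mathrm{Pic}_{(P/k)(et)}(k)$ along $\phi$ gives a globally generated class, and compatibility of the Leray differential with $\phi$ together with the identity $d(\mathcal{O}_P(1))=[P]$ gives $d(\mathcal{L})=[P]$. The converse, however, has a genuine gap. Taking $V=H^0(X_L,\mathcal{L}_L)$ (the only canonical Galois-semilinear choice, and you do need $H^0(X_L,\mathcal{O}_{X_L})=L$ for the ``well-defined up to scalar'' assertion) produces a morphism from $X$ to the Brauer--Severi variety $Q$ obtained by descending $\mathbb{P}(V^\vee)$. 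The cocycle computation you describe shows $[Q]=d(\mathcal{L})=[P]$, but this does not yield $Q\simeq P$: two Brauer--Severi varieties with the same Brauer class can have different dimensions, and $\dim Q = h^0(\mathcal{L})-1$ has no reason to equal $\dim P$. A morphism $X\to Q$ does not automatically give a morphism $X\to P$; if $\dim Q > \dim P$ one would need a twisted-linear projection, which is only a rational map, and if $\dim Q<\dim P$ one needs a twisted-linear embedding $Q\hookrightarrow P$, whose existence is an index-divisibility statement you have not addressed. Equivalently, one must produce a base-point-free, Galois-stable-up-to-scalar subspace of $H^0(X_L,\mathcal{L}_L)$ of the correct dimension $\dim P+1$; this is the real content of the converse and is precisely where the argument in \cite{LI} does extra work. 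As written, your sketch proves only that $X$ admits a morphism to \emph{some} Brauer--Severi variety in the class $[P]$, not to the given $P$.
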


\section{Noncommutative motives of central simple and separable algebras}
We refer to the book \cite{GTAT} (alternatively see \cite{TTT} and \cite{MTT} for a survey on noncommutative motives).
Let $\mathcal{A}$ be a small dg category. The homotopy category $H^0(\mathcal{A})$ has the same objects as $\mathcal{A}$ and as morphisms $H^0(\mathrm{Hom}_{\mathcal{A}}(x,y))$. A source of examples is provided by schemes since the derived category of perfect complexes $\mathrm{perf}(X)$ of any quasi-compact quasi-separated scheme $X$ admits a canonical dg enhancement $\mathrm{perf}_{dg}(X)$ (for details see \cite{KELT}). Denote by $\textbf{dgcat}$ the category of small dg categories. The \emph{opposite} dg category $\mathcal{A}^{op}$ has the same objects as $\mathcal{A}$ and $\mathrm{Hom}_{\mathcal{A}^{op}}(x,y):=\mathrm{Hom}_{\mathcal{A}}(y,x)$. A \emph{right $\mathcal{A}$-module} is a dg functor $\mathcal{A}^{op}\rightarrow C_{dg}(k)$ with values in the dg category $C_{dg}(k)$ of complexes of $k$-vector spaces. We write $C(\mathcal{A})$ for the category of right $\mathcal{A}$-modules. Recall form \cite{KELT} that the \emph{derived category} $D(\mathcal{A})$ of $\mathcal{A}$ is the localization of $C(\mathcal{A})$  with respect to quasi-isomorphisms. A dg functor $F\colon \mathcal{A}\rightarrow \mathcal{B}$ is called \emph{derived Morita equivalence} if the restriction of scalars functor $D(\mathcal{B})\rightarrow D(\mathcal{A})$ is an equivalence. The \emph{tensor product} $\mathcal{A}\otimes \mathcal{B}$ of two dg categories is defined as follows: the set of objects is the cartesian product of the sets of objects in $\mathcal{A}$ and $\mathcal{B}$ and $\mathrm{Hom}_{\mathcal{A}\otimes \mathcal{B}}((x,w),(y,z)):=\mathrm{Hom}_{\mathcal{A}}(x,y)\otimes\mathrm{Hom}_{\mathcal{B}}(w,z)$ (see \cite{KELT}). Given two dg categories $\mathcal{A}$ and $\mathcal{B}$, let $\mathrm{rep}(\mathcal{A},\mathcal{B})$ be the full triangulated subcategory of $D(\mathcal{A}^{op}\otimes \mathcal{B})$ consisting of those $\mathcal{A}\text{-}\mathcal{B}$-bimodules $M$ such that $M(x,-)$ is a compact object of $D(\mathcal{B})$ for every object $x\in \mathcal{A}$. 
The category $\textbf{dgcat}$ of all (small) dg categories and dg functors carries a Quillen model structure whose weak equivalences are Morita equivalences. Let us denote by $\mathrm{Hmo}$ the homotopy category hence obtained and by $\mathrm{Hmo}_0$ its additivization. Now to any small dg category $\mathcal{A}$ one can associate functorially its \emph{noncommutative motive} $U(\mathcal{A})$ which takes values in $\mathrm{Hmo}_0$. This functor $U\colon \textbf{dgcat}\rightarrow \mathrm{Hmo}_0$ is proved to be the \emph{universal additive invariant} (see \cite{TA1T}). Recall that an additive invariant is any functor $E\colon \textbf{dgcat}$ $\rightarrow \mathcal{D}$ taking values in an additive category $\mathcal{D}$ such that
\begin{itemize}
	\item[(\textbf{i})] it sends derived Morita equivalences to isomorphisms,\\
	
	\item[(\textbf{ii})] for any pre-triangulated dg category $\mathcal{A}$ admitting full pre-triangulated dg subcategories $\mathcal{B}$ and $\mathcal{C}$ such that $H^0(\mathcal{A})=\langle H^0(\mathcal{B}), H^0(\mathcal{C})\rangle$ is a semiorthogonal decomposition, the morphism $E(\mathcal{B})\oplus E(\mathcal{C})\rightarrow E(\mathcal{A})$ induced by the inclusions is an isomorphism.
\end{itemize}			
For central simple $k$-algebras one has the following comparison theorem.
\begin{thm}[\cite{TAT}, Theorem 2.19]
	Let $A_1,...,A_n$ and $B_1,...,B_m$ be central simple $k$-algebras, then the following are equivalent:
	\begin{itemize}
		\item[(\textbf {i})] There is an isomorphism
		\begin{eqnarray*}
			\bigoplus^n_{i=1}U(A_i)\simeq \bigoplus^m_{j=1}U(B_j).
		\end{eqnarray*}
		
		\item[(\textbf {ii})] The equality $n=m$ holds and for all $1\leq i\leq n$ and all $p$
		\begin{eqnarray*}
			[B^p_i]=[A^p_{\sigma_p(i)}]\in \mathrm{Br}(k)
		\end{eqnarray*}
		for some permutations $\sigma_p$ depending on $p$.
	\end{itemize}
\end{thm}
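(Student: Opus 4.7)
The plan is to translate the abstract isomorphism of noncommutative motives into K-theoretic data and then recover Brauer-class information. The key calculation is the Hom formula in $\mathrm{Hmo}_0$: for central simple algebras $A,B$ over $k$, one has $\mathrm{Hom}_{\mathrm{Hmo}_0}(U(A), U(B)) \simeq K_0(\mathrm{rep}(A,B)) \simeq K_0(A^{op} \otimes_k B)$. Since $A^{op}\otimes_k B$ is again a central simple algebra, its $K_0$ is a copy of $\mathbb{Z}$, but the generator carries index information depending on the Brauer class $[A^{op}]\cdot[B]\in\mathrm{Br}(k)$. This is the probe I would use throughout.

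For the direction (ii)$\Rightarrow$(i), I would combine the hypothesis with the primary decomposition of central simple algebras and the Morita invariance of $U$. In the simplest reading, the permutation corresponding to $p=1$ gives $[A_{\sigma_1(i)}] = [B_i]$ in $\mathrm{Br}(k)$, so $A_{\sigma_1(i)}$ and $B_i$ are Morita equivalent and hence $U(A_{\sigma_1(i)}) \simeq U(B_i)$ because $U$ inverts derived Morita equivalences; summing then yields (i). More generally one patches the primary data prime-by-prime using that a central simple algebra decomposes as a tensor product of its $p$-primary components, but the underlying principle is the same: matching Brauer classes in any level produce matching motives via Morita invariance.

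For the converse (i)$\Rightarrow$(ii), I would first show that each $U(A)$ is indecomposable in $\mathrm{Hmo}_0$, since $\mathrm{End}_{\mathrm{Hmo}_0}(U(A)) \simeq K_0(A^{op} \otimes_k A) \simeq K_0(\mathrm{End}_k(A)) \simeq \mathbb{Z}$ admits no nontrivial idempotents. A Krull--Schmidt-type argument (after passing to a suitable idempotent completion) then forces $n = m$ together with a permutation $\sigma$ identifying the summands $U(A_{\sigma(i)}) \simeq U(B_i)$. The final and hardest step is to deduce from $U(A) \simeq U(B)$ the Brauer-class equalities $[A^p]=[B^p]$ appearing in (ii); I would do this by applying the functors $\mathrm{Hom}_{\mathrm{Hmo}_0}(U(C),-)$ for $C$ ranging over a rich family of central simple algebras (for instance, symbol algebras separating primary components) and reading the indices of $A^{op}\otimes_k C$ against those of $B^{op}\otimes_k C$. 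The main obstacle will be exactly this last step: passing from abstract equalities of $K_0$-ranks to equalities in $\mathrm{Br}(k)$ requires delicate index-comparison arguments in the style of Merkurjev--Suslin, and this constitutes the heart of the proof in \cite{TA}.
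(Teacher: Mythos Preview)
The paper does not prove this theorem at all: it is quoted verbatim from \cite{TA}, Theorem~2.19, and used as a black box in Section~6. So there is no ``paper's own proof'' to compare with; I can only comment on whether your sketch stands on its own.

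Your sketch for (i)$\Rightarrow$(ii) contains a genuine gap. You argue that since each $U(A_i)$ has endomorphism ring $K_0(A_i^{op}\otimes A_i)\simeq\mathbb{Z}$ and is therefore indecomposable, a Krull--Schmidt argument yields a \emph{single} permutation $\sigma$ with $U(A_{\sigma(i)})\simeq U(B_i)$, reducing everything to the case $n=m=1$. But $\mathbb{Z}$ is not a local ring, and $\mathrm{Hmo}_0$ is \emph{not} a Krull--Schmidt category for these objects. Indeed, the very shape of condition~(ii) --- permutations $\sigma_p$ that genuinely depend on the prime $p$ --- tells you that Krull--Schmidt fails here: one can have $\bigoplus_i U(A_i)\simeq\bigoplus_i U(B_i)$ with no pairwise matching $U(A_{\sigma(i)})\simeq U(B_i)$ whatsoever. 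The actual proof in \cite{TA} does not attempt to cancel summands; it works prime by prime, passing to the $p$-primary parts of the Brauer classes and exploiting that for a central simple algebra $C$ the map $K_0(k)\to K_0(C)$ is multiplication by $\mathrm{ind}(C)$, together with a careful combinatorial argument on index multisets. Your closing remark that ``the main obstacle will be exactly this last step'' is correct, but the obstacle sits earlier than you place it: the Krull--Schmidt reduction itself is invalid.

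For (ii)$\Rightarrow$(i), your ``simplest reading'' with $p=1$ is a misreading: in \cite{TA} the index $p$ ranges over \emph{primes}, so there is no $p=1$ to hand you Brauer equivalence for free. Your fallback remark about patching the primary components prime by prime is the right idea, and it is essentially how \cite{TA} proceeds, using that $U$ is symmetric monoidal and that a central simple algebra factors as the tensor product of its $p$-primary pieces; but this needs to be the argument, not an afterthought.
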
	
Later, we also need the following result.
\begin{prop}[\cite{TABT}, Proposition 4.5]
	Let $A_1,...,A_n$ and $B_1,...,B_m$ be central simple $k$-algebras, and $NM$ a noncommutative motive. If
	\begin{eqnarray*}
		\bigoplus^n_{i=1}U(A_i)\oplus NM\simeq \bigoplus^m_{j=1}U(B_j)\oplus NM,
	\end{eqnarray*}
	then $n=m$ and $U(A_1)\oplus\cdots \oplus U(A_n)\simeq U(B_1)\oplus\cdots \oplus U(B_n)$.
\end{prop}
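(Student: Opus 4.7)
The plan is to reduce the cancellation property to the combinatorial classification of isomorphisms between direct sums of noncommutative motives of central simple algebras provided by Theorem 5.2.

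First, observe that Theorem 5.2 translates the desired conclusion $\bigoplus_{i=1}^n U(A_i) \simeq \bigoplus_{j=1}^m U(B_j)$ into the assertion that $n=m$ and, for every positive integer $p$, the multisets $\{[A_i^p] : 1\leq i \leq n\}$ and $\{[B_j^p] : 1 \leq j \leq m\}$ inside $\mathrm{Br}(k)$ coincide. So the real task is to extract these Brauer-class data from the original isomorphism in $\mathrm{Hmo}_0$, where general cancellation need not hold.

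The key step is to construct, for every positive integer $p$, an additive invariant (in the sense of the two axioms recalled just before Theorem 5.2)
$$\Phi_p : \mathbf{dgcat} \longrightarrow \mathrm{Ab}$$
with values in a free abelian group of the form $\mathbb{Z}[\mathrm{Br}(k)]$, such that $\Phi_p$ evaluated on a central simple $k$-algebra $A$ (viewed as a dg category concentrated in degree zero) returns the basis element indexed by the class $[A^p] \in \mathrm{Br}(k)$. Such invariants can be built out of the algebraic K-theory of Azumaya algebras together with the Brauer class of their endomorphism rings, or alternatively from a suitable noncommutative Chow motivic decomposition. Since each $\Phi_p$ is additive, the universal property of $U\colon\mathbf{dgcat}\to \mathrm{Hmo}_0$ provides a factorization $\Phi_p = \overline{\Phi}_p \circ U$.

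Applying $\overline{\Phi}_p$ to both sides of the given isomorphism $\bigoplus_i U(A_i) \oplus NM \simeq \bigoplus_j U(B_j) \oplus NM$ and using additivity yields an equality in the abelian group $\mathbb{Z}[\mathrm{Br}(k)]$ in which the term $\overline{\Phi}_p(NM)$ appears on both sides and may be cancelled. Reading off the total mass of the resulting element for $p=1$ forces $n=m$, and comparing the coefficients of each basis vector for every $p\geq 1$ forces the multiset equalities $\{[A_i^p]\} = \{[B_j^p]\}$. Invoking Theorem 5.2 in the reverse direction then produces the required isomorphism $U(A_1)\oplus \cdots \oplus U(A_n) \simeq U(B_1)\oplus \cdots \oplus U(B_n)$.

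The main obstacle is precisely the construction of the family $\{\Phi_p\}$: one must exhibit additive functors from $\mathbf{dgcat}$ to an abelian cancellation category that are simultaneously fine enough to separate the Brauer classes of all tensor powers of a central simple algebra, yet well-defined on arbitrary small dg categories (and therefore compatible with semiorthogonal decompositions and derived Morita equivalences). This is essentially the same delicate point that lies behind Theorem 5.2 itself, and once those invariants are in place the cancellation statement follows by the formal argument sketched above.
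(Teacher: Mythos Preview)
The paper does not supply its own proof of this proposition; it is quoted from \cite{TAB}, Proposition~4.5, and used as a black box in the proof of Theorem~6.7. There is therefore no in-paper argument to compare your sketch against.

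On the sketch itself: first a bookkeeping point --- you repeatedly invoke ``Theorem~5.2'' for the classification of isomorphisms $\bigoplus U(A_i)\simeq\bigoplus U(B_j)$, but that classification is Theorem~5.1 here; Proposition~5.2 is precisely the statement you are trying to prove. More substantially, your argument is, by your own admission, incomplete. You reduce the problem to the existence of additive invariants $\Phi_p\colon\mathbf{dgcat}\to\mathbb{Z}[\mathrm{Br}(k)]$ sending a central simple algebra $A$ to the basis vector indexed by $[A^p]$, and then state that constructing such $\Phi_p$ is ``the main obstacle.'' But that construction \emph{is} the content of the proposition: once one has genuinely additive invariants landing in a cancellative target and separating the relevant Brauer data, the rest is formal. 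In Tabuada's actual argument the role of your hypothetical $\Phi_p$ is played by algebraic $K$-theory with finite coefficients at the various primes, together with the computations of \cite{TA} that pin down these groups for central simple algebras; this is what makes the cancellation go through in an honest abelian group. Your overall shape (pass to additive invariants valued in a cancellation category, then appeal to Theorem~5.1) is correct in spirit, but until you exhibit concrete $\Phi_p$ with the required separation property, what you have is a strategy rather than a proof.
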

\noindent
Recall the following definition (see for instance \cite{MBTT}).
\begin{defi}
	\textnormal{Let $\mathcal{T}$ be a triangulated category. A \emph{dg-enhancement} of $\mathcal{T}$ is a pair $(\mathcal{A}, \epsilon)$, where $\mathcal{A}$ is a pretriangulated dg category and $\epsilon\colon H^0(\mathcal{A})\rightarrow \mathcal{T}$ an equivalence of triangulated categories. Assume $\mathcal{T}$ has a dg-enhancement. Then $\mathcal{T}$ admits a \emph{unique} dg-enhancement if for any two dg-enhancements $(\mathcal{A},\epsilon)$ and $(\mathcal{A}', \epsilon')$ there is a dg-functor $\Psi\colon \mathcal{A}\rightarrow \mathcal{A}'$, inducing an equivalence $H^0(\Psi)\colon H^0(\mathcal{A})\rightarrow H^0(\mathcal{A}')$.}
\end{defi}



\section{Proofs of the results}
\begin{lem}
	Let $X$ be a proper variety over a field $k$ with $H^0(X^s,\mathcal{O}_{X^s})=k^s$. If $\mathcal{V}$ is a pure vector bundle on $X$, then $\mathrm{End}(\mathcal{V})$ is a central simple $k$-algebra.
\end{lem}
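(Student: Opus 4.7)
The plan is to reduce the statement to the characterization of central simple $k$-algebras recalled in Section 2: a finite-dimensional $k$-algebra $A$ is central simple if and only if there is a (finite separable) field extension $L/k$ such that $A \otimes_k L \simeq M_n(L)$. We will show that $k^s$ is a splitting field for $\mathrm{End}(\mathcal{V})$ in this sense.

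First I would observe that, since $X$ is proper over $k$ and $\mathcal{V}$ is coherent, the $k$-algebra $\mathrm{End}(\mathcal{V}) = H^0(X, \mathcal{V}^\vee \otimes \mathcal{V})$ is finite dimensional over $k$. Next, using that $\mathcal{V}$ is locally free and that the base change $X^s \to X$ is flat, I would invoke flat base change to identify
\begin{equation*}
\mathrm{End}(\mathcal{V}) \otimes_k k^s \;\simeq\; \mathrm{End}_{X^s}(\mathcal{V} \otimes_k k^s).
\end{equation*}
By the hypothesis that $\mathcal{V}$ is pure, there is a line bundle $\mathcal{L}$ on $X^s$ and an integer $m \geq 1$ with $\mathcal{V} \otimes_k k^s \simeq \mathcal{L}^{\oplus m}$. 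Then
\begin{equation*}
\mathrm{End}_{X^s}(\mathcal{L}^{\oplus m}) \;\simeq\; M_m\!\bigl(\mathrm{End}_{X^s}(\mathcal{L})\bigr) \;\simeq\; M_m\!\bigl(H^0(X^s, \mathcal{O}_{X^s})\bigr) \;=\; M_m(k^s),
\end{equation*}
where the middle identification uses $\mathcal{L}^\vee \otimes \mathcal{L} \simeq \mathcal{O}_{X^s}$ together with the standing assumption $H^0(X^s, \mathcal{O}_{X^s}) = k^s$. Combining these steps gives
\begin{equation*}
\mathrm{End}(\mathcal{V}) \otimes_k k^s \;\simeq\; M_m(k^s).
\end{equation*}

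To match the formulation in Section 2, which demands a finite extension, I would note that $\mathrm{End}(\mathcal{V})$ is finite dimensional over $k$, so the isomorphism above is determined by the images of finitely many structure constants and the matrix units of $M_m$; since $k^s$ is the filtered colimit of finite separable extensions of $k$, the isomorphism descends to some finite $L/k$, yielding $\mathrm{End}(\mathcal{V}) \otimes_k L \simeq M_m(L)$. By the criterion recalled in Section 2 this forces $\mathrm{End}(\mathcal{V})$ to be central simple over $k$.

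The proof is essentially formal, so there is no real obstacle; the only subtle points are checking that the flat base change $\mathrm{End}(\mathcal{V}) \otimes_k k^s \simeq \mathrm{End}_{X^s}(\mathcal{V}^s)$ is correct (which requires $\mathcal{V}$ to be locally free, as assumed) and that the hypothesis $H^0(X^s, \mathcal{O}_{X^s}) = k^s$ is used precisely to identify $\mathrm{End}(\mathcal{L})$ with $k^s$—without it one cannot conclude that the line bundle contributes only scalars to the endomorphism ring.
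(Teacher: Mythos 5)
Your proposal is correct and follows essentially the same route as the paper's own proof: base change to $k^s$, use purity to write $\mathcal{V}\otimes_k k^s\simeq\mathcal{L}^{\oplus m}$, and compute $\mathrm{End}(\mathcal{L}^{\oplus m})\simeq M_m(k^s)$ via the hypothesis on global sections. The additional remarks about flat base change and descent to a finite subextension just make explicit what the paper leaves implicit.
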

\begin{proof}
	As $\mathcal{V}$ is pure, there is a line bundle $\mathcal{L}\in \mathrm{Pic}(X\otimes_k k^s)$ such that $\mathcal{V}\otimes_k k^s\simeq \mathcal{L}^{\oplus m}$. The assertion then follows from the following isomorphisms
	\begin{eqnarray*}
		\mathrm{End}(\mathcal{V})\otimes_k k^s  \simeq  \mathrm{End}(\mathcal{L}^{\oplus m})\simeq \mathrm{End}(\mathcal{O}_{X^s}^{\oplus m})\simeq  M_{m}(k^s).
	\end{eqnarray*}
\end{proof}

\begin{prop}
	Let $X$ be a projective variety over a field $k$ with $H^0(X^s,\mathcal{O}_{X^s})=k^s$. Suppose $\mathcal{V}$ is a pure vector bundle on $X$. Then there is a morphism $X\rightarrow \mathrm{BS}(\mathrm{End}(\mathcal{V}))$.
\end{prop}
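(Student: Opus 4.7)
My plan is to invoke Theorem 4.1 with target $P=\mathrm{BS}(\mathrm{End}(\mathcal{V}))$, which is a well-defined Brauer--Severi variety by Lemma 6.1. It therefore suffices to exhibit a globally generated class $\mathcal{L}\in\mathrm{Pic}_{(X/k)(\mathrm{et})}(k)$ whose Brauer obstruction equals $[\mathrm{End}(\mathcal{V})]\in\mathrm{Br}(k)$.

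The class is produced directly from the purity of $\mathcal{V}$. By definition, there is a line bundle $\mathcal{L}$ on $X^s$ with $\mathcal{V}\otimes_k k^s\simeq \mathcal{L}^{\oplus m}$. Since $\mathcal{V}$ is defined over $k$, pulling back by any $\sigma\in G_s$ gives $(\sigma^*\mathcal{L})^{\oplus m}\simeq \mathcal{L}^{\oplus m}$, and the Krull--Schmidt theorem on the proper variety $X^s$ (which is available thanks to $H^0(X^s,\mathcal{O}_{X^s})=k^s$) forces $\sigma^*\mathcal{L}\simeq \mathcal{L}$. Hence $[\mathcal{L}]\in\mathrm{Pic}(X^s)^{G_s}\simeq \mathrm{Pic}_{(X/k)(\mathrm{et})}(k)$, as recalled in Section 4.

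The heart of the argument is the identification $d(\mathcal{L})=[\mathrm{End}(\mathcal{V})]$. Choose trivializations $\phi_\sigma\colon\sigma^*\mathcal{L}\to \mathcal{L}$ for each $\sigma\in G_s$; the failure of $(\phi_\sigma)$ to be a $1$-cocycle is a class $c\in H^2(G_s,(k^s)^\times)=\mathrm{Br}(k)$ which, by construction of the connecting map in the low-degree Leray sequence of Section 4, represents $d(\mathcal{L})$. On the other hand, the intrinsic descent datum of $\mathcal{V}$ equips $\mathrm{End}_{X^s}(\mathcal{L}^{\oplus m})\simeq M_m(k^s)$ (cf.\ Lemma 6.1) with a semilinear $G_s$-action whose fixed $k$-algebra is $\mathrm{End}(\mathcal{V})$. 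Transporting this action into the standard diagonal form via the $\phi_\sigma$ identifies the $2$-cocycle governing the Brauer class of $\mathrm{End}(\mathcal{V})$ with $c$, yielding the desired equality in $\mathrm{Br}(k)$.

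Finally, one must verify that the class $\mathcal{L}$ is globally generated in the sense of Section 4; this is the step I expect to be the main technical obstacle. Passing to a splitting field $K$ of $\mathcal{L}$, the purity decomposition persists as $\mathcal{V}_K\simeq \mathcal{L}_K^{\oplus m}$, so $\mathcal{L}_K$ is realized as both a subsheaf and a quotient of $\mathcal{V}_K$; combining this with a direct global-sections argument on $\mathcal{V}_K$ (noting that any section of $\mathcal{V}_K$ projects to a section of $\mathcal{L}_K$ and conversely) reduces global generation of $\mathcal{L}_K$ to the evident fact that $\mathcal{V}_K$ is generated by its own sections in the relevant sense. Once these three ingredients are assembled, Theorem 4.1 yields the desired morphism $X\to \mathrm{BS}(\mathrm{End}(\mathcal{V}))$.
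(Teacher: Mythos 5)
The overall strategy is right — Lemma 6.1 gives a central simple algebra, the line bundle $\mathcal{L}$ from the purity decomposition is Galois-invariant, and after identifying its Brauer obstruction with $[\mathrm{End}(\mathcal{V})]$ one would like to invoke Theorem 4.1. But the last step, where you try to show that the class $\mathcal{L}$ is globally generated, has a genuine gap. There is no reason whatsoever for $\mathcal{L}_K$ (or $\mathcal{V}_K$) to be globally generated: the line bundle $\mathcal{L}$ appearing in $\mathcal{V}\otimes_k k^s\simeq\mathcal{L}^{\oplus m}$ can be, say, anti-ample, in which case $\mathcal{V}_K$ has no nonzero global sections at all. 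Your assertion that ``$\mathcal{V}_K$ is generated by its own sections in the relevant sense'' is not an evident fact but is simply false in general, and the argument collapses there.

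The paper's proof avoids this by twisting. One descends to a finite Galois extension $E/k$ with group $G$ over which $\mathcal{L}$ is defined by some $\mathcal{N}$, picks an ample line bundle $\mathcal{M}$ on $X\otimes_k E$, and forms the $G$-equivariant ample line bundle $\mathcal{M}'=\bigotimes_{g\in G}g^*\mathcal{M}$. For $n\gg 0$ the twist $\mathcal{L}':=\mathcal{N}\otimes\mathcal{M}'^{\otimes n}$ is globally generated, and $\mathcal{L}'$ is still Galois-invariant, hence lies in $\mathrm{Pic}_{(X/k)(\mathrm{et})}(k)$. Crucially, $\mathcal{M}'^{\otimes n}$ descends to a genuine line bundle $\mathcal{R}\in\mathrm{Pic}(X)$, so $d(\mathcal{L}')=d(\mathcal{N})$ (the obstruction to descent is unchanged by tensoring with a class that already descends) and $\mathrm{End}(\mathcal{V}\otimes\mathcal{R})\simeq\mathrm{End}(\mathcal{V})$. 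Thus the twist buys global generation without altering either the Brauer obstruction or the endomorphism algebra, and Theorem 4.1 applies to $\mathcal{L}'$ with target $\mathrm{BS}(\mathrm{End}(\mathcal{V}))$. This twisting step is essential and cannot be omitted; it is precisely what your proposal is missing.
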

\begin{proof}
	We have $\mathcal{V}\otimes_k k^s\simeq \mathcal{L}^{\oplus m}$ for some line bundle $\mathcal{L}\in \mathrm{Pic}(X\otimes_k k^s)$ and Lemma 6.1 shows that $\mathrm{End}(\mathcal{V})$ is isomorphic to a central simple $k$-algebra $A$. 
	Let $E/k$ be a finite Galois extension within $k^s$ over which $\mathcal{V}$ is defined, i.e over which there exists a line bundle $\mathcal{N}$ such that $\mathcal{L}\simeq \mathcal{N}\otimes_E k^s$. Then \cite{WT}, Lemma 2.3 implies $\mathcal{V}\otimes_k E\simeq \mathcal{N}^{\oplus m}$. So we restrict to $X\otimes_k E$. On $X\otimes_k E$ we can find an ample line bundle $\mathcal{M}$. Then $\mathcal{M}'=\bigotimes_{g\in G}g^*\mathcal{M}$ is a $G:=\mathrm{Gal}(E|k)$-equivariant ample line bundle on $X\otimes_k E$. Note that for a suitable $n>0$, the line bundle $\mathcal{L}':=\mathcal{N}\otimes \mathcal{M}'^{\otimes n}$ is globally generated. From Section 4 we know $\mathcal{L}'\in \mathrm{Pic}_{(X/k)(et)}(k)$. Since $\mathcal{M}'^{\otimes n}$ descents to a line bundle $\mathcal{R}$ on $X$, we conclude 
	$(\mathcal{V}\otimes \mathcal{R})\otimes_k E\simeq \mathcal{L}'^{\oplus m}$. Now \cite{WT}, Lemma 2.3 shows that if $\mathcal{W}$ is another vector bundle satisfying $\mathcal{W}\otimes_k E\simeq \mathcal{L}'^{\oplus m}$, then $\mathcal{V}\otimes \mathcal{R}\simeq \mathcal{W}$. Hence
	$\mathrm{End}(\mathcal{W})\simeq \mathrm{End}(\mathcal{V})\simeq A$. Theorem 4.1 provides us with a morphism $X\rightarrow \mathrm{BS}(A)$.   
\end{proof}
\begin{thm}
	Let $X$ be a smooth projective variety over a field $k$ with $H^0(X^s,\mathcal{O}_{X^s})=k^s$ and assume $D^b(X)$ admits a full weak exceptional collection of pure vector bundles. If $X(k)\neq \emptyset$, then $\mathrm{rdim}(X)=0$. In particular, assuming $\mathrm{dim}(X)\geq 2$, if $X$ is $k$-rational, then $\mathrm{rcodim}(X)\geq 2$.
\end{thm}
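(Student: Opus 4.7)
The plan is to combine the two preceding results (Lemma 6.1 and Proposition 6.2) with the standard fact that a full weak exceptional collection produces a semiorthogonal decomposition into pieces of the form $D^b(A_i)$, and then use the existence of a rational point to force each $A_i$ to be trivial.

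More precisely, let $\{\mathcal{E}_1,\dots,\mathcal{E}_n\}$ be the full weak exceptional collection of pure vector bundles. Setting $A_i := \mathrm{End}(\mathcal{E}_i)$, the weak exceptionality hypothesis says $A_i$ is a division $k$-algebra and $\mathrm{Hom}(\mathcal{E}_i,\mathcal{E}_i[r])=0$ for $r \neq 0$, while Lemma 6.1 upgrades $A_i$ to a central simple $k$-algebra. Hence each $A_i$ is a central division $k$-algebra, and the subcategory $\langle \mathcal{E}_i \rangle \subset D^b(X)$ is equivalent to $D^b(A_i)$ (viewed as a $k$-linear category). The fullness then gives a semiorthogonal decomposition
\begin{eqnarray*}
D^b(X) \;=\; \langle D^b(A_1),\dots,D^b(A_n)\rangle.
\end{eqnarray*}

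Next I would invoke Proposition 6.2: since each $\mathcal{E}_i$ is a pure vector bundle, there exists a morphism $\phi_i \colon X \to \mathrm{BS}(A_i)$. By hypothesis $X(k)\neq\emptyset$, so composing with $\phi_i$ gives a $k$-rational point on every $\mathrm{BS}(A_i)$. But a Brauer--Severi variety has a $k$-rational point if and only if the associated central simple algebra is split, i.e.\ Brauer-equivalent to $k$. Since $A_i$ is already a central \emph{division} algebra, being Brauer-equivalent to $k$ forces $A_i \simeq k$. In particular, each $\mathcal{E}_i$ is actually exceptional in the strict sense.

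Consequently each block in the decomposition satisfies $D^b(A_i) \simeq D^b(k) \simeq D^b(\mathrm{Spec}\,k)$, and $\mathrm{Spec}\,k$ is a smooth projective connected $k$-variety of dimension $0$. This is exactly what is required by the definition of representability in dimension $0$, so $\mathrm{rdim}(X)=0$. The only subtle step is really the identification of $\langle\mathcal{E}_i\rangle$ with $D^b(A_i)$, but for weak exceptional objects this is a standard (and previously used in the same paper) consequence of $\mathrm{Hom}$-computations together with the fact that $\mathcal{E}_i$ generates the thick subcategory it spans; after that the argument is essentially a bookkeeping of Lemma 6.1, Proposition 6.2, and the rational-point characterization of split Brauer--Severi varieties.
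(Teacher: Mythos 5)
Your proof is correct and follows essentially the same path as the paper's: use Proposition 6.2 to produce morphisms $X\to \mathrm{BS}(A_i)$, use $X(k)\neq\emptyset$ to force each $A_i$ to split, conclude the collection is genuinely exceptional, and deduce $\mathrm{rdim}(X)=0$. Your direct-composition step ($\phi_i$ is a morphism, so it pushes a $k$-point of $X$ forward) is actually a small improvement over the paper, which invokes the Lang--Nishimura Theorem even though $X$ is not assumed smooth; otherwise the arguments coincide modulo bookkeeping (the paper descends the $\mathcal{V}_i$ to line bundles $\mathcal{L}_i$ and cites \cite{AB1}, Prop.~6.1.6, while you work directly with $D^b(A_i)\simeq D^b(k)$).
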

\begin{proof}
	Denote by $\mathcal{V}_1,...,\mathcal{V}_r$ the full weak exceptional collection for $D^b(X)$. As all $\mathcal{V}_i$ are pure, Proposition 6.2 provides us with morphisms $X\rightarrow \mathrm{BS}(A_i)$, where $A_i=\mathrm{End}(\mathcal{V}_i)$. If $X$ admits a $k$-rational point, the Lang--Nishimura Theorem implies the existence of $k$-rational points on $\mathrm{BS}(A_i)$. Therefore all $\mathrm{BS}(A_i)$ are split. From this we conclude that $\mathcal{V}_i=\mathcal{L}_i^{\oplus m_i}$ for some line bundles $\mathcal{L}_i\in \mathrm{Pic}(X)$. But then $\{\mathcal{L}_1,...,\mathcal{L}_r\}$ forms a full exceptional collection in $D^b(X)$. Finally, Proposition 6.1.6 in \cite{AB1T} yields $\mathrm{rdim}(X)=0$.
\end{proof}

\begin{prop}
	Let $X$ be a Brauer--Severi variety over a field $k$ of characteristic zero corresponding to $A$ and $S^l(X)$ its symmetric power. Denote by $X_l$ the generalized Brauer--Severi variety associated to $A$ and let $l<\mathrm{deg}(A)$ be arbitrary. If $\mathrm{rdim}(X_l)=0$, then $S^l(X)(k)\neq \emptyset$.
\end{prop}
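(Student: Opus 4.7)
The natural plan is to split the implication into $\mathrm{rdim}(X_l)=0\Rightarrow X_l(k)\neq\emptyset\Rightarrow S^l(X)(k)\neq\emptyset$. The second arrow is classical: both conditions are equivalent to $\mathrm{ind}(A)\mid l$. Indeed, interpreting $X_l$ as the variety of right ideals of reduced dimension $l$ in $A$, one has $X_l(k)\neq\emptyset$ iff such an ideal exists over $k$, iff $\mathrm{ind}(A)\mid l$. On the other hand, a $k$-rational point on $S^l(X)$ is a $k$-rational effective zero-cycle of degree $l$ on $X$, and for $X=\mathrm{BS}(A)$ with $l<\mathrm{deg}(A)$ this happens precisely when $\mathrm{ind}(A)\mid l$ (any such zero-cycle class can be represented by an effective one since $X^s\simeq \mathbb{P}^{\mathrm{deg}(A)-1}$).

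For the main arrow, I would compare two semiorthogonal decompositions of $D^b(X_l)$. On the one hand, a Bernardara-type decomposition available for generalized Brauer--Severi varieties yields
\[
D^b(X_l)=\langle D^b(k,A^{\otimes|\lambda|})\rangle_{\lambda},
\]
where $\lambda$ runs over Young diagrams fitting in an $l\times(\mathrm{deg}(A)-l)$ rectangle and each block is the derived category of modules over a central simple $k$-algebra Brauer-equivalent to the displayed power of $A$. On the other hand, the hypothesis $\mathrm{rdim}(X_l)=0$ produces a second semiorthogonal decomposition
\[
D^b(X_l)=\langle D^b(L_j/k)\rangle_j
\]
for finite separable field extensions $L_j/k$, since admissible subcategories of $D^b(Y)$ with $\mathrm{dim}(Y)=0$ are derived categories of spectra of finite extensions of $k$.

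Passing to the universal additive invariant $U$, which respects semiorthogonal decompositions, gives an isomorphism of noncommutative motives
\[
\bigoplus_\lambda U(A^{\otimes|\lambda|})\;\simeq\;\bigoplus_j U(L_j/k).
\]
I would then iteratively cancel matching summands via Proposition~5.2 and invoke the comparison Theorem~5.1 — or an appropriate separable-algebra extension of it — to force each Brauer class $[A^{\otimes m}]$ on the left to coincide, up to Brauer equivalence, with a class represented by a product of field extensions on the right. Since the latter classes are Brauer-trivial over $k$, this pins down enough powers $[A^{\otimes m}]$ to be trivial in $\mathrm{Br}(k)$ that $\mathrm{ind}(A)$ must divide $l$, closing the argument.

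The principal obstacle is precisely this last motivic comparison: Theorem~5.1 as cited concerns central simple $k$-algebras, while the $L_j/k$ are central simple only over themselves. Bridging the gap requires either invoking Tabuada's broader classification of noncommutative motives of separable $k$-algebras, or performing a finite base change followed by a Galois descent to reduce the right-hand side to a collection of central simple $k$-algebras before applying Theorem~5.1 and Proposition~5.2.
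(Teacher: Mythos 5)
Your proposal is correct in substance but considerably longer than the paper's proof, which in effect outsources most of the work you redo. The paper's argument is three short steps: (a) by \cite{KS}, Theorem~1.5, there is a birational map $X_l\times\mathbb{P}^{l(l-1)}\dashrightarrow S^l(X)$; (b) the hypothesis $\mathrm{rdim}(X_l)=0$ together with \cite{NO2}, Theorem~6.6 forces $X_l$ to be a Grassmannian, so $X_l(k)\neq\emptyset$; (c) Lang--Nishimura applied to the birational map then gives $S^l(X)(k)\neq\emptyset$. Your ``main arrow'' via the Bernardara-type semiorthogonal decomposition of $D^b(X_l)$ and comparison of noncommutative motives is precisely the content of the cited \cite{NO2}, Theorem~6.6, so you are re-deriving rather than invoking that result; moreover, the correct conclusion of that motivic comparison is that $A$ itself is split (since the $|\lambda|=1$ block $D^b(k,A)$ appears once $1\le l<\mathrm{deg}(A)$, and Tabuada--Van den Bergh's classification for separable algebras matches centers before comparing Brauer classes), which is stronger and cleaner than your somewhat vague ``enough powers $[A^{\otimes m}]$ trivial so $\mathrm{ind}(A)\mid l$.'' Your ``second arrow'' via the equivalence $S^l(X)(k)\neq\emptyset\iff X$ carries an effective zero-cycle of degree $l\iff\mathrm{ind}(A)\mid l\iff X_l(k)\neq\emptyset$ is a legitimate and arguably more intrinsic alternative to the Krashen--Saltman birational map plus Lang--Nishimura; what it buys you is independence from \cite{KS}, at the cost of needing the identification of $k$-points of $S^l(X)$ with effective zero-cycles, which you should state explicitly. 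Overall: correct ideas, different route, but noticeably heavier machinery than necessary given that \cite{NO2}, Theorem~6.6 is already available.
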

\begin{proof}
	According to \cite{KST}, Theorem 1.5 there is a birational map $X_l\times \mathbb{P}^{l(l-1)}\dashrightarrow S^l(X)$. If $\mathrm{rdim}(X_l)=0$, then \cite{NO2T}, Theorem 6.5 implies $X_l$ is a Grassmannian. Therefore $X_l(k)\neq \emptyset$. From the Lang--Nishimura Theorem we conclude $S^l(X)(k)\neq \emptyset$.
\end{proof}
\begin{thm}
	Let $X$ be a Brauer--Severi variety over $\mathbb{R}$ or a twisted quadric associated to a central simple $\mathbb{R}$-algebra $(A,\sigma)$ with involution of orthogonal type. Then $D^b_{S_n}(X^n)$ admits a full weak exceptional collection. 
\end{thm}
\begin{proof}
	We start with $\mathrm{dim}(X)=1$. Note that in this case the twisted quadric is a Brauer--Severi curve over $\mathbb{R}$. Then $S^n(X)$ is smooth for any integer $n>0$ and $S^n(X)\otimes_k k^s\simeq S^n(X\otimes_k k^s)\simeq S^n(\mathbb{P}^1)\simeq \mathbb{P}^n$. Therefore $S^n(X)$ is a Brauer--Severi variety which has a full weak exceptional collection over any field $k$ (see \cite{ORLT}, Example 1.17). We first prove the assertion for $X$ being a non-split Brauer--Severi variety. We will see that the split case can be proved in the same way.
	
	So let $X$ be a Brauer--Severi variety of dimension $2r-1\geq 2$ corresponding to the central simple algebra $M_r(\mathbb{H})$.  Note that the period of $X$ is two. Therefore $\mathcal{O}_X(2)$ exists in $\mathrm{Pic}(X)$ (see \cite{AT}). From \cite{NOT}, Section 6 we know that there is a indecomposable vector bundle $\mathcal{V}_1$ on $X$ such that $\mathcal{V}_1\otimes_k k^s\simeq \mathcal{O}_{\mathbb{P}^r}(1)^{\oplus 2}$. If $X$ is split, $\mathcal{V}_1\simeq \mathcal{O}_{\mathbb{P}^r}(1)$. This vector bundle is unique up to isomorphism and $\mathrm{End}(\mathcal{V}_1)$ is isomorphic to the central division algebra $D$ for which $M_r(D)$ corresponds to $X$. So if $X$ is split, $\mathrm{End}(\mathcal{V}_1)\simeq \mathbb{R}$. Otherwise, we have $\mathrm{End}(\mathcal{V}_1)\simeq \mathbb{H}$. Moreover, it is well-known that the collection
	\begin{eqnarray}
		\{\mathcal{O}_X,\mathcal{V}_1,\mathcal{O}_X(2), \mathcal{V}_1\otimes\mathcal{O}_X(2),...,\mathcal{O}_X(2(r-1)),\mathcal{V}_1\otimes\mathcal{O}_X(2(r-1))\}
	\end{eqnarray}   
	is a full weak exceptional collection in $D^b(X)$ (see \cite{ORLT}, Example 1.17). For simplicity, let us denote the collection (1) by  
	\begin{eqnarray*}
		\{\mathcal{V}_0,\mathcal{V}_1,\mathcal{V}_2,...,\mathcal{V}_{2(r-1)},\mathcal{V}_{2r-1}\}.
	\end{eqnarray*}
	For every multi-index $\alpha=(\alpha_1,...,\alpha_n)\in \{0,1,...,2r-1\}^n$ we write
	\begin{eqnarray*}
		\mathcal{V}(\alpha):=\mathcal{V}_{\alpha_1}\boxtimes\cdots\boxtimes \mathcal{V}_{\alpha_n}.
	\end{eqnarray*}
	This is a vector bundle on $X^n$ and it is easy to check that the collection consisting of the $\mathcal{V}(\alpha)$, ordered by the lexicographical order, generates $D^b(X^n)$. Now we want to reorder the sequence consisting of these $\mathcal{V}(\alpha)$. So for a multi-index $\alpha$, we follow \cite{KRST}, Section 4 and write for the unique non-decreasing representative of its $S_n$ orbit $\mathrm{nd}(\alpha)$. Then one can define a total order $\triangleleft$ on $\{0,1,...,2r-1\}^n$ by
	\[
	\alpha\triangleleft\beta \Longleftrightarrow \left\{\begin{array}{ll} \mathrm{nd}(\alpha)<_{\mathrm{lex}} \mathrm{nd}(\beta) & \textnormal{or}\\
	\mathrm{nd}(\alpha)=\mathrm{nd}(\beta) & \textnormal{and}\ \alpha<_{\mathrm{lex}}\beta \end{array}\right.
	\]
	where $<_{\mathrm{lex}}$ stands for the lexicographical order on $\{0,1,...,2r-1\}^n$. For details we refer to \cite{KRST}. Now the group $S_n$ acts transitively on the blocks consisting of all $\mathcal{V}(\alpha)$ with fixed $\mathrm{nd}(\alpha)$ because $\sigma^*\mathcal{V}(\alpha)\simeq \mathcal{V}(\sigma^{-1}\cdot \alpha)$. Furthermore, any $\mathcal{V}(\alpha)$ has a canonical $\mathrm{Stab}(\alpha)$-linearization given by permutation of the factors in the box product.
	If $\alpha$ is a non-decreasing multi-index and $V^{(\alpha)}_i$ an irreducible representation of $H_{\alpha}:=\mathrm{Stab}(\alpha)$, we can get a full weak exceptional collection out of the collection consisting of the vector bundles $\mathrm{Inf}^{S_n}_{H_{\alpha}}(\mathcal{V}(\alpha)\otimes V^{(\alpha)}_i)$. To get this exceptional collection, we first consider $\mathrm{End}_{S_n}(\mathrm{Inf}^{S_n}_{H_{\alpha}}(\mathcal{V}(\alpha)\otimes V^{(\alpha)}_i))$. In particular, the proof of Theorem 2.12 in \cite{ELT} and the K\"unneth-formula show that there are isomorphisms
	\begin{eqnarray*}
		\mathrm{End}_{S_n}(\mathrm{Inf}^{S_n}_{H_{\alpha}}(\mathcal{V}(\alpha)\otimes V^{(\alpha)}_i))&\simeq&\mathrm{Hom}_{H_{\alpha}}(\mathrm{Res}^{S_n}_{H_{\alpha}}\mathrm{Inf}^{S_n}_{H_{\alpha}}(\mathcal{V}(\alpha)\otimes V^{(\alpha)}_i),\mathcal{V}(\alpha)\otimes V^{(\alpha)}_i)\\
		&\simeq& \mathrm{Hom}_{H_{\alpha}}(\mathcal{V}(\alpha)\otimes V^{(\alpha)}_i,\mathcal{V}(\alpha)\otimes V^{(\alpha)}_i)\\
		&\simeq& (\mathrm{End}(\mathcal{V}_{\alpha_1})\otimes\cdots\otimes \mathrm{End}(\mathcal{V}_{\alpha_n})\otimes \mathrm{End}( V^{(\alpha)}_i))^{H_{\alpha}}.
	\end{eqnarray*}
	The case $n=1$ is clear, since the Brauer--Severi variety $X$ admits a full weak exceptional collection (see \cite{ORLT}, Example 1.17). 
	For $n>1$ we notice that the endomorphism ring of any irreducible representation of a finite group over $\mathbb{R}$ is isomorphic to $\mathbb{R}, \mathbb{C}$ or $\mathbb{H}$ according to Schur's Lemma and a theorem of Frobenius. From the construction of the collection (1) we see that $\mathrm{End}(\mathcal{V}_{\alpha_1})\otimes\cdots\otimes \mathrm{End}(\mathcal{V}_{\alpha_n})$ must be isomorphic to either $M_s(\mathbb{R})$ or $M_t(\mathbb{H})$ for suitable positive integers $s$ and $t$. Therefore, the finite-dimensional $\mathbb{R}$-algebra
	\begin{eqnarray*}
		\mathrm{End}(\mathcal{V}_{\alpha_1})\otimes\cdots\otimes \mathrm{End}(\mathcal{V}_{\alpha_n})\otimes \mathrm{End}( V^{(\alpha)}_i)
	\end{eqnarray*}
	must be isomorphic to $M_{s'}(\mathbb{R}), M_{r'}(\mathbb{C})$ or $M_{t'}(\mathbb{H})$ for suitable positive integers $s', r'$ and $t'$.
	Since $S_n$, and hence $H_{\alpha}$, acts on $\mathrm{End}(\mathcal{V}_{\alpha_1})\otimes\cdots\otimes \mathrm{End}(\mathcal{V}_{\alpha_n})\otimes \mathrm{End}( V^{(\alpha)}_i)$ by automorphism and any automorphism of a matrix-ring over a unique factorization domain is inner, \cite{MONT}, Corollary 2.13 implies that there are simple rings $A_1,...,A_{l(\alpha,i)}$ such that 
	\begin{eqnarray*}
		(\mathrm{End}(\mathcal{V}_{\alpha_1})\otimes\cdots\otimes \mathrm{End}(\mathcal{V}_{\alpha_n})\otimes \mathrm{End}( V^{(\alpha)}_i))^{H_{\alpha}}\simeq A_1\times\cdots \times A_{l(\alpha,i)}
	\end{eqnarray*}
	Clearly, the rings $A_1,...,A_{l(\alpha,i)}$ must be finite dimensional $\mathbb{R}$-algebras. Below we have to deal with positive integers $l(\alpha,i), m(\alpha,i)_j$ and $h(\alpha,i)$, depending on $\alpha$ and $i$. For a better readability, we simply write $l,m_j$ and $h$. As $\mathrm{Inf}^{S_n}_{H_{\alpha}}(\mathcal{V}(\alpha)\otimes V^{(\alpha)}_i)$ is a $S_n$-equivariant vector bundle, we apply the Krull--Schmidt Theorem to decompose it into a direct sum of indecomposables in the category of equivariant coherent sheaves $\mathrm{Coh}_{S_n}(X^n)$. Let
	\begin{eqnarray*}
		\mathrm{Inf}^{S_n}_{H_{\alpha}}(\mathcal{V}(\alpha)\otimes V^{(\alpha)}_i)=T(\alpha,i)_1^{\oplus m_1}\oplus \cdots \oplus T(\alpha,i)_{h}^{\oplus m_{h}}
	\end{eqnarray*}
	be this decomposition. We have seen above that
	\begin{eqnarray*}
		\mathrm{End}_{S_n}(T(\alpha,i)_1^{\oplus m_1}\oplus \cdots \oplus T(\alpha,i)_{h}^{\oplus m_{h}})\simeq A_1\times\cdots \times A_{l}.
	\end{eqnarray*}
	This implies $h=l$ and $\mathrm{End}_{S_n}(T(\alpha,i)_j^{\oplus m_j})\simeq A_j$. Combining a theorem of Frobenius with the Wedderburn Theorem, we obtain that $A_j$ is isomorphic to a matrix algebra over $\mathbb{R}, \mathbb{C}$ or $\mathbb{H}$. We claim that for non-decreasing $\alpha$, the collection of blocks $\{T(\alpha,i)_1,...,T(\alpha,i)_l\}$ forms a full weak exceptional collection. That the collection of blocks $\{T(\alpha,i)_1,...,T(\alpha,i)_l\}$ generates $D^b_{S_n}(X^n)$ follows from the fact that the collection consisting of the vector bundles $\mathrm{Inf}^{S_n}_{H_{\alpha}}(\mathcal{V}(\alpha)\otimes V^{(\alpha)}_i)$ generates $D^b_{S_n}(X^n)$. The argument for this fact is part of the proof of Theorem 2.12 in \cite{ELT}. For convenience of the reader, we recall the argument. Take any $\mathcal{F}\in D^b_{S_n}(X^n)$, $\mathcal{F}\neq 0$. As mentioned before, the collection consisting of the $\mathcal{V}(\alpha)$ generates $D^b(X^n)$. So for some $p$ and $\alpha$ we will have $\mathrm{Hom}^p(\mathcal{F},\mathcal{V}(\alpha))\neq 0$. Hence $\mathrm{RHom}(\mathcal{F},\mathcal{V}(\alpha))\neq 0$. Denote by $V$ the object $\mathrm{RHom}(\mathcal{F},\mathcal{V}(\alpha))^*$. Then because the functors $\mathrm{RHom}(-,\mathcal{V}(\alpha))^*$ and $F_{\mathcal{V}(\alpha)}:=\mathcal{V}(\alpha)\otimes -$ are adjoint, we find
	\begin{eqnarray*}
		\mathrm{Hom}_{S_n}(\mathcal{F},\mathrm{Inf}^{S_n}_{H_{\alpha}}(\mathcal{V}(\alpha)\otimes V))&\simeq& \mathrm{Hom}_{H_{\alpha}}(\mathcal{F},\mathcal{V}(\alpha)\otimes V)\\
		&\simeq& \mathrm{Hom}_{H_{\alpha}}(\mathrm{RHom}(\mathcal{F},\mathcal{V}(\alpha))^*,V)\\
		&\simeq& \mathrm{Hom}_{H_{\alpha}}(V,V)\neq 0.
	\end{eqnarray*}
	This proves that the collection of vector bundles $\mathrm{Inf}^{S_n}_{H_{\alpha}}(\mathcal{V}(\alpha)\otimes V^{(\alpha)}_i)$, and therefore the collection of blocks $\{T(\alpha,i)_1,...,T(\alpha,i)_l\}$, generates $D^b_{S_n}(X^n)$. So it remains to show that any $T(\alpha,i)_j$ is a weak exceptional object and the the collection of blocks $\{T(\alpha,i)_1,...,T(\alpha,i)_l\}$ forms a weak exceptional collection. Note that $\mathrm{End}_{S_n}(T(\alpha,i)_j)$ is a division algebra by construction.
	
	The proof of Theorem 2.12 in \cite{ELT} shows
	\begin{eqnarray}
		\mathrm{Ext}^d(\mathrm{Inf}^{S_n}_{H_{\alpha}}(\mathcal{V}(\alpha)\otimes V^{(\alpha)}_i),\mathrm{Inf}^{S_n}_{H_{\alpha}}(\mathcal{V}(\alpha)\otimes V^{(\alpha)}_i))=0,\ \textnormal{for}\ d>0.
	\end{eqnarray}
	Therefore $\mathrm{Ext}^d(T(\alpha,i)_j,T(\alpha,i)_j)=0$ for $d>0$. This yields that $T(\alpha,i)_j$ is a weak exceptional object. From (2) we can also conclude that within a block $\{T(\alpha,i)_1,...,T(\alpha,i)_l\}$, the following holds:
	\begin{eqnarray*}
		\mathrm{Ext}^d(T(\alpha,i)_a, T(\alpha,i)_b)=0,\ \textnormal{for}\ d>0
	\end{eqnarray*}
	if $a>b$. Moreover, the proof of Theorem 2.12 in \cite{ELT} also shows
	\begin{eqnarray*}
		\mathrm{Ext}^d(\mathrm{Inf}^{S_n}_{H_{\beta}}(\mathcal{V}(\beta)\otimes V^{(\alpha)}_j),\mathrm{Inf}^{S_n}_{H_{\alpha}}(\mathcal{V}(\alpha)\otimes V^{(\alpha)}_i))=0,\ \textnormal{for}\ d>0
	\end{eqnarray*}
	whenever $\alpha\triangleleft \beta$. But this implies
	\begin{eqnarray*}
		\mathrm{Ext}^d(T(\beta,j)_a,T(\alpha,i)_b)=0,\ \textnormal{for}\ d>0
	\end{eqnarray*}
	whenever $\alpha\triangleleft \beta$. This shows that the collection of blocks $\{T(\alpha,i)_1,...,T(\alpha,i)_l\}$ forms a full weak exceptional collection.

	If $X$ is split, i.e. isomorphic to $\mathbb{P}^m$, one can repeat the above argument with the collection
	\begin{eqnarray}
		\{\mathcal{O},\mathcal{O}(1),...,\mathcal{O}(m-1),\mathcal{O}(m)\}.
	\end{eqnarray}
	Denote this collection by $\{\mathcal{E}_0,...,\mathcal{E}_m\}$ and consider multi-indices $\alpha\in\{0,1,...,m\}^n$. Again, we write
	\begin{eqnarray*}
		\mathcal{E}(\alpha):=\mathcal{E}_{\alpha_1}\boxtimes\cdots\boxtimes \mathcal{E}_{\alpha_n}.
	\end{eqnarray*}
	Since the collection (3) is a full exceptional collection for $D^b(\mathbb{P}^m)$ and $(-)^{H_{\alpha}}$ is exact, we have
	\begin{eqnarray*}
		\mathrm{End}_{S_n}(\mathrm{Inf}^{S_n}_{H_{\alpha}}(\mathcal{E}(\alpha)\otimes V^{(\alpha)}_i))\simeq (\mathrm{End}(V_i^{(\alpha)}))^{H_{\alpha}}\simeq \mathrm{End}_{H_{\alpha}}(V_i^{(\alpha)}).
	\end{eqnarray*}
	Note that $\mathrm{End}_{H_{\alpha}}(V_i^{(\alpha)})$ is isomorphic to $\mathbb{R},\mathbb{C}$ or $\mathbb{H}$. Now repeat the above arguments to conclude that the collection of vector bundles $\mathrm{Inf}^{S_n}_{H_{\alpha}}(\mathcal{E}(\alpha)\otimes V^{(\alpha)}_i)$ forms a full weak exceptional collection for $D^b_{S_n}((\mathbb{P}^m)^n)$. Now consider the case where $X$ is a twisted quadric associated to a central simple $\mathbb{R}$-algebra $(A,\sigma)$ with involution of orthogonal type. Let $n$ be the degree of $A$. According to \cite{BLUT}, there is a semiorthogonal decomposition with exactly $n-1$ components
	\begin{eqnarray*}
		D^b({_\gamma}Q)=\langle D^b(\mathbb{R}), D^b(A),...,D^b(\mathbb{R}), D^b(A), D^b(C(A,\sigma))\rangle.
	\end{eqnarray*}
	Note that this semiorthogonal decomposition is induced by vector bundles $\mathcal{V}_1, \mathcal{V}_2,...,\mathcal{V}_{n-1}$ on ${_\gamma}Q$ satisfying $\mathrm{End}(\mathcal{V}_1)=\mathbb{R}, \mathrm{End}(\mathcal{V}_2)=A,...,\mathrm{End}(\mathcal{V}_{n-1})=C(A,\sigma)$. From \cite{KMRT}, Theorem 8.10 we know that $C(A,\sigma)$ is either a central simple algebra over $\mathbb{C}$ or that $C(A,\sigma)$ splits as the direct product of two central simple $\mathbb{R}$-algebras. In the first case $C(A,\sigma)\simeq M_{n_0}(\mathbb{C})$ whereas in the latter case $C(A,\sigma)$ is isomorphic to $A_1\times A_2$, where $A_i$ is isomorphic to $M_{n_1}(\mathbb{R})$ or $M_{n_2}(\mathbb{H})$. Now one applies the arguments for Brauer--Severi varieties from above to the full weak collection  
	collection
	\begin{eqnarray*}
		\{\mathcal{V}_1,\mathcal{V}_2,...\mathcal{V}_{n-1}\}.
	\end{eqnarray*}
	Note that in the split case, the collection $\mathcal{V}_1,...,\mathcal{V}_{n-1}$ is the full exceptional collection from Kapranov \cite{KA2T} (alternatively see \cite{BLUT}). This completes the proof.
\end{proof}
\begin{cor}
	Let $X$ be a Brauer--Severi variety over $\mathbb{R}$ or a twisted quadric associated to a central simple $\mathbb{R}$-algebra $(A,\sigma)$ with involution of orthogonal type and $1\leq n\leq 3$. We set $\mathcal{T}:=D^b_{S_n}(X^n)$. If $\mathrm{rdim}(X)=0$, then $\mathrm{rdim}(\mathcal{T})=0$.
\end{cor}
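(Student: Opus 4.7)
The plan is to reduce the corollary to the already-established split case of Theorem 6.5 and then compute the relevant endomorphism algebras explicitly. For a Brauer--Severi variety $X$ over $\mathbb{R}$, the hypothesis $\mathrm{rdim}(X)=0$ forces $X(\mathbb{R})\neq\emptyset$, and by the classical split criterion for Brauer--Severi varieties (or equivalently by the results in \cite{NO2}) this is the same as $X\simeq\mathbb{P}^m_{\mathbb{R}}$ for some $m\geq 0$. I may therefore replace $X$ by $\mathbb{P}^m_{\mathbb{R}}$ throughout.

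With $X$ split, I would invoke the second half of the proof of Theorem 6.5, which produces a full weak exceptional collection of $\mathcal{T}=D^b_{S_n}((\mathbb{P}^m)^n)$ given by the bundles $\mathrm{Inf}^{S_n}_{H_\alpha}(\mathcal{E}(\alpha)\otimes V_i^{(\alpha)})$, indexed by non-decreasing multi-indices $\alpha\in\{0,\dots,m\}^n$ together with irreducible $\mathbb{R}$-representations $V_i^{(\alpha)}$ of the stabilizer $H_\alpha$. The same proof identifies the endomorphism algebra of each such block with $\mathrm{End}_{H_\alpha}(V_i^{(\alpha)})$, which by Schur's lemma and Frobenius' theorem must be one of $\mathbb{R}$, $\mathbb{C}$ or $\mathbb{H}$. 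The decisive step is to rule out $\mathbb{C}$ and $\mathbb{H}$: each stabilizer $H_\alpha$ is a Young subgroup $\prod_j S_{n_j}$ with $\sum_j n_j=n\leq 3$, and every irreducible representation of $S_1$, $S_2$ and $S_3$ is defined over $\mathbb{Q}$ and absolutely irreducible (for $S_3$ these are the trivial, sign and $2$-dimensional standard representation). External tensor products of absolutely irreducible representations remain absolutely irreducible of real type, so $\mathrm{End}_{H_\alpha}(V_i^{(\alpha)})\simeq\mathbb{R}$ for every pair $(\alpha,i)$.

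Once this computation is in hand, the weak exceptional collection produced by Theorem 6.5 is in fact an honest full exceptional collection of $\mathcal{T}$, yielding a semiorthogonal decomposition into blocks each equivalent to $D^b(\mathrm{Spec}(\mathbb{R}))$. Since $\mathrm{Spec}(\mathbb{R})$ is a smooth projective connected variety of dimension $0$, the definition of categorical representability immediately gives $\mathrm{rdim}(\mathcal{T})=0$. The main obstacle I foresee is precisely the representation-theoretic verification above: if a quaternionic block were to occur, the corresponding admissible subcategory could not embed into any $D^b(Y)$ with $Y$ smooth projective connected zero-dimensional over $\mathbb{R}$, since the endomorphism rings of objects in such $D^b(Y)$ are matrix algebras over $\mathbb{R}$ or $\mathbb{C}$, and the argument would collapse. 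For $n\leq 3$ the enumeration of irreducibles makes this immediate, which is the essential arithmetic point confining the corollary to this range of $n$.
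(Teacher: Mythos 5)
Your proposal is correct and follows the same overall strategy as the paper: reduce to the split case $X\simeq\mathbb{P}^m_{\mathbb{R}}$ via $\mathrm{rdim}(X)=0$, invoke the split half of Theorem 6.5's proof, and compute $\mathrm{End}_{S_n}\bigl(\mathrm{Inf}^{S_n}_{H_\alpha}(\mathcal{E}(\alpha)\otimes V_i^{(\alpha)})\bigr)\simeq\mathrm{End}_{H_\alpha}(V_i^{(\alpha)})$. The one place where you diverge, and in fact sharpen the argument, is the representation-theoretic step. The paper enumerates \emph{all} subgroups of $S_3$, including $A_3$, and therefore only concludes that the endomorphism algebras are $\mathbb{R}$ or $\mathbb{C}$; it then still gets $\mathrm{rdim}(\mathcal{T})=0$ by appealing to $\mathrm{rdim}(D^b(\mathbb{C}))=\mathrm{rdim}(D^b(\mathbb{R}))=0$. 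You observe instead that the stabilizer $H_\alpha$ of a non-decreasing multi-index is always a Young subgroup $\prod_j S_{n_j}$, so $A_3$ cannot actually occur, and since Specht modules are defined over $\mathbb{Q}$ and external tensor products of absolutely irreducible real-type representations remain of real type, every $\mathrm{End}_{H_\alpha}(V_i^{(\alpha)})\simeq\mathbb{R}$. This yields a genuine full \emph{exceptional} collection, not merely a weak one, and avoids the excursion through $D^b(\mathbb{C})$. Both routes are correct, but yours is tighter.

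One small caveat on your closing remark: the Young-subgroup observation plus rationality of Specht modules does not actually depend on $n\leq3$, since these facts hold for $S_k$ for all $k$. So this direction of the result is not what "confines" the paper to $n\leq 3$; that bound is imposed by other considerations in the converse direction of Theorem 6.7, not by the argument you give here.
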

\begin{proof}
	The case $n=1$ is clear, since $S^1(X)=X$ and $X$ admits a full weak exceptional collection. For $n=2$, we see that $H_{\alpha}$ must be isomorphic to either the trivial group or to $S_2$ itself. In both cases $\mathrm{End}_{H_{\alpha}}( V^{(\alpha)}_i)\simeq \mathbb{R}$. It remains to consider $n=3$. The possible subgroups of $S_3$ are the trivial group, $S_2$, $A_3$ and $S_3$ itself. If $H_{\alpha}$ is either $S_2$ or $S_3$, \cite{PET}, Theorem 4.1 implies $\mathrm{End}_{H_{\alpha}}( V^{(\alpha)}_i)\simeq \mathbb{R}$. Furthermore, the representation theory of $A_3$ over $\mathbb{R}$ is well-known and gives that $\mathrm{End}_{A_3}( V^{(\alpha)}_i)$ is isomorphic to $\mathbb{R}$ or $\mathbb{C}$. If $\mathrm{rdim}(X)=0$, \cite{NO2T}, Proposition 5.1 and Theorem 5.6 show that $X$ must be split. In the case of Brauer--Severi varieties, $X$ is therefore isomorphic to $\mathbb{P}^m$ and we can consider the full exceptional collection (3) from above
	\begin{eqnarray*}
		\{\mathcal{O},\mathcal{O}(1),...,\mathcal{O}(m-1),\mathcal{O}(m)\}.
	\end{eqnarray*}
	Denote this collection by $\{\mathcal{E}_0,...,\mathcal{E}_m\}$ and consider multi-indices $\alpha\in\{0,1,...,m\}^n$ as in the proof of Theorem 6.5. Again, we write
	\begin{eqnarray*}
		\mathcal{E}(\alpha):=\mathcal{E}_{\alpha_1}\boxtimes\cdots\boxtimes \mathcal{E}_{\alpha_n}.
	\end{eqnarray*}
	Since the collection (3) is a full exceptional collection for $D^b(\mathbb{P}^m)$, we have
	\begin{eqnarray*}
		\mathrm{End}_{S_n}(\mathrm{Inf}^{S_n}_{H_{\alpha}}(\mathcal{E}(\alpha)\otimes V^{(\alpha)}_i))\simeq \mathrm{End}_{H_{\alpha}}(V_i^{(\alpha)})\simeq \mathbb{R}\ \textnormal{or}\ \mathbb{C}.
	\end{eqnarray*}
	Now the proof of Theorem 6.5 shows that the collection consisting of the vector bundles $\mathrm{Inf}^{S_n}_{H_{\alpha}}(\mathcal{E}(\alpha)\otimes V^{(\alpha)}_i)$ forms a full weak exceptional collection and gives therefore rise to a semiorthogonal decomposition. Since $\mathrm{rdim}(D^b(\mathbb{C}))=\mathrm{rdim}(D^b(\mathbb{R}))=0$ (see \cite{AB1T}, Proposition 6.1.6), we finally obtain $\mathrm{rdim}(\mathcal{T})=0$. For $X$ a twisted quadric associated to a central simple algebra $(A,\sigma)$ with involution of orthogonal type one can repeat the arguments from above. In this case one uses the full exceptional collection from Kapranov \cite{KA2T}.  
\end{proof}
\begin{thm}
	Let $X$ be a Brauer--Severi variety over $\mathbb{R}$ or a twisted quadric associated to a central simple $\mathbb{R}$-algebra $(A,\sigma)$ with involution of orthogonal type and $1\leq n\leq 3$. We set $\mathcal{T}:=D^b_{S_n}(X^n)$. Then the following hold:
	\begin{itemize}
		\item[(\textbf{i})] $\mathrm{rdim}(X)=0$ if and only if $\mathrm{rdim}(\mathcal{T})=0$.
		\item[(\textbf{ii})] $X(\mathbb{R})\neq \emptyset$ if and only if $\mathrm{rdim}(\mathcal{T})=0$.
	\end{itemize}
\end{thm}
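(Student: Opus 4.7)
Part (ii) reduces to (i) via standard facts. For a Brauer--Severi variety $X$ over any field $k$ it is classical (Châtelet) that $X(k)\neq\emptyset$ if and only if $X$ is split; over $\mathbb{R}$ split means $X\simeq\mathbb{P}^m_{\mathbb{R}}$, which carries the full exceptional collection $\{\mathcal{O},\mathcal{O}(1),\dots,\mathcal{O}(m)\}$ of line bundles, so $\mathrm{rdim}(X)=0$. Conversely, $\mathrm{rdim}(X)=0\Rightarrow X$ split is \cite{NO2}, Proposition 5.1, already invoked in Corollary 6.6. Hence $X(\mathbb{R})\neq\emptyset\Leftrightarrow\mathrm{rdim}(X)=0$, and (i) implies (ii).

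For (i), the forward direction is Corollary 6.6. Assume conversely $\mathrm{rdim}(\mathcal{T})=0$. Every smooth projective connected variety of dimension $0$ over $\mathbb{R}$ is $\mathrm{Spec}(\mathbb{R})$ or $\mathrm{Spec}(\mathbb{C})$, and every nonzero admissible subcategory of $D^b(K)$ for $K$ a field is the whole category, so
\[
\mathcal{T}=\langle D^b(L_1),\dots,D^b(L_M)\rangle,\qquad L_j\in\{\mathbb{R},\mathbb{C}\}.
\]
On the other hand, the proof of Theorem 6.5 exhibits a full weak exceptional collection whose blocks $T(\alpha,i)_j$ have endomorphism algebras that are matrix algebras over $\mathbb{R},\mathbb{C}$ or $\mathbb{H}$, so after Morita reduction
\[
\mathcal{T}=\langle D^b(E_1),\dots,D^b(E_N)\rangle,\qquad E_i\in\{\mathbb{R},\mathbb{C},\mathbb{H}\}.
\]
Passing to noncommutative motives and grouping by isomorphism type yields an isomorphism
\[
a\cdot U(\mathbb{R})\oplus b\cdot U(\mathbb{C})\oplus c\cdot U(\mathbb{H})\simeq a'\cdot U(\mathbb{R})\oplus b'\cdot U(\mathbb{C})
\]
in $\mathrm{Hmo}_0$, and the goal becomes $c=0$.

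The hard part is that $\mathbb{C}$ is not a central simple $\mathbb{R}$-algebra, so Theorem 5.1 does not apply to the whole isomorphism. I would first match the $\mathbb{C}$-counts by means of an additive invariant separating $U(\mathbb{C})$ from $U(\mathbb{R})$ and $U(\mathbb{H})$. Since $U$ is the universal additive invariant, it determines $K_1$, and the torsion subgroups of $K_1(\mathbb{R})=\mathbb{R}^{\times}$, $K_1(\mathbb{C})=\mathbb{C}^{\times}$ and $K_1(\mathbb{H})=\mathbb{H}^{\times}_{\mathrm{ab}}=\mathbb{R}_{>0}$ are $\mathbb{Z}/2$, $\mathbb{Q}/\mathbb{Z}$ and $0$ respectively (using Wang/Dieudonné for $\mathbb{H}$, noting $\mathrm{SL}_1(\mathbb{H})\cong\mathrm{SU}(2)$ is perfect). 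Comparing the $3$-torsion on both sides of the motivic isomorphism forces $b=b'$. With this equality, Proposition 5.2 applies with $NM=b\cdot U(\mathbb{C})$ and cancels that summand, reducing to
\[
a\cdot U(\mathbb{R})\oplus c\cdot U(\mathbb{H})\simeq a'\cdot U(\mathbb{R}).
\]
Both sides now consist solely of central simple $\mathbb{R}$-algebras, so Theorem 5.1 applies: the equality of unordered multisets of Brauer classes in $\mathrm{Br}(\mathbb{R})=\{[\mathbb{R}],[\mathbb{H}]\}$ forces $c=0$.

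Finally, to derive a contradiction when $X$ is non-split, I would produce an explicit $\mathbb{H}$-block in the collection of Theorem 6.5. Take the multi-index $\alpha=(1,0,\dots,0)$, so that $H_\alpha=\mathrm{Stab}(\alpha)=S_{n-1}$ acts trivially on the first tensor slot and permutes the others, and pick $V_i^{(\alpha)}$ the trivial $H_\alpha$-representation. The endomorphism formula from the proof of Theorem 6.5 gives
\[
\bigl(\mathrm{End}(\mathcal{V}_1)\otimes\mathrm{End}(\mathcal{V}_0)^{\otimes(n-1)}\otimes\mathbb{R}\bigr)^{S_{n-1}}=\bigl(\mathbb{H}\otimes\mathbb{R}\bigr)^{S_{n-1}}=\mathbb{H},
\]
so one of the blocks $T(\alpha,i)_j$ has $\mathrm{End}_{S_n}=\mathbb{H}$, contributing $c\geq 1$ and contradicting the conclusion above. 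Hence $X$ must be split and $\mathrm{rdim}(X)=0$, completing (i) and therefore (ii). The main obstacle is precisely the cancellation step: no single statement in the paper handles $U(\mathbb{C})$ alongside central simple $\mathbb{R}$-algebras simultaneously, and bridging this via a $K_1$-torsion invariant before invoking Theorem 5.1 and Proposition 5.2 is the essential technical manoeuvre. I expect this construction to work for arbitrary $n\geq 1$, matching the author's remark that $\mathrm{rdim}(\mathcal{T})=0\Rightarrow\mathrm{rdim}(X)=0$ has no upper bound on $n$.
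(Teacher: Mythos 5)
Your proposal is correct, and the overall skeleton agrees with the paper: establish (ii) from (i) via \cite{NO2}, take the forward direction of (i) from Corollary~6.6, and for the converse compare two semiorthogonal decompositions of $\mathcal{T}$ at the level of noncommutative motives, isolate a block whose endomorphism algebra is the central division algebra of $X$, cancel the $U(\mathbb{C})$-summands using Proposition~5.2, and finish with Theorem~5.1. The genuine difference lies in how you justify the matching of $\mathbb{C}$-multiplicities $b=b'$ before invoking Proposition~5.2. You extract this from the $3$-torsion of $K_1$ (using that $K_1(\mathbb{R})$ and $K_1(\mathbb{H})\simeq\mathbb{R}_{>0}$ have no $3$-torsion while $K_1(\mathbb{C})=\mathbb{C}^\times$ does, $K_n$ being an additive invariant factoring through $U$); the paper instead base-changes to $\mathbb{C}$ and compares the ranks of $K_0(\mathcal{T})$ and $K_0(\mathcal{T}\otimes_{\mathbb{R}}\mathbb{C})$, noting that each $\mathbb{C}$-block contributes rank $2$ after base change (since $\mathbb{C}\otimes_{\mathbb{R}}\mathbb{C}\simeq\mathbb{C}\times\mathbb{C}$) whereas the $\mathbb{R}$- and $\mathbb{H}$-blocks contribute rank $1$, which gives the same numerical constraint $d=\sum m_i$. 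Both routes are valid; the paper's rank computation is somewhat more elementary (it uses only $K_0$ and base change, no facts about $\mathbb{H}^\times_{\mathrm{ab}}$ or Wang's theorem), while yours is arguably cleaner conceptually since it avoids the auxiliary scalar extension. You also pick a different witness for the $\mathbb{H}$-block: $\alpha=(1,0,\dots,0)$ with $H_\alpha=S_{n-1}$ acting trivially on $\mathrm{End}(\mathcal{V}_1)\simeq\mathbb{H}$, whereas the paper uses $\alpha=(1,2)$ for $n=2$ and $\alpha=(0,1,2)$ for $n=3$, both with trivial stabilizer (requiring $\dim X>1$, which is why the paper splits off the curve case and handles it via the Brauer--Severi identification $D^b_{S_n}(C^n)\simeq D^b(S^n(C))$); your choice works uniformly in $\dim X$ and $n$ and correctly anticipates the paper's Remark~6.8 that the implication $\mathrm{rdim}(\mathcal{T})=0\Rightarrow\mathrm{rdim}(X)=0$ holds for all $n$.
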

\begin{proof}
	For $n=1$, (i) and (ii) is the content of \cite{NO2T}, Theorem 6.3 and Proposition 6.10. Note that $\mathcal{T}=D^b(X)$ and hence $\mathrm{rdim}(X)=\mathrm{rdim}(\mathcal{T})$. So we can consider $2\leq n\leq 3$. If $\mathrm{dim}(X)=1$, $S^n(X)\otimes_k k^s\simeq \mathbb{P}^n$ and therefore $S^n(X)$ is a Brauer--Severi variety. Again, (i) and (ii) follows from \cite{NO2T}, Theorem 6.3. Now we assume $\mathrm{dim}(X)>1$. We prove the statement only in the Brauer--Severi case and use the full weak exceptional collection (1). For $n=2$, we consider $\alpha=(1,2)$ and see $\mathrm{Stab}(\alpha)=\{\mathrm{id}\}$. Analogously, for $n=3$ we consider $\alpha=(0,1,2)$ and observe $\mathrm{Stab}(\alpha)=\{\mathrm{id}\}$. So in both of these cases we have
	\begin{eqnarray}
		\mathrm{End}_{S_n}(\mathrm{Inf}^{S_n}_{\{\mathrm{id}\}}(\mathcal{V}(\alpha)\otimes V^{(\alpha)}_i))\simeq \mathrm{End}(\mathcal{V}_1).
	\end{eqnarray}
	Recall, that $\mathrm{End}(\mathcal{V}_1)$ is isomorphic to the central division algebra $D$ for which $M_r(D)$ corresponds to $X$.
	Now we prove (i) for $\mathrm{dim}(X)>1$ and $2\leq n\leq 3$. Assume $\mathrm{rdim}(X)=0$. Then Corollary 6.6 gives $\mathrm{rdim}(\mathcal{T})=0$. On the other hand, if $\mathrm{rdim}(\mathcal{T})=0$, the derived category $\mathcal{T}=D^b_{S_n}(X^n)$ must have a semiorthogonal decomposition of the form
	\begin{eqnarray}
		D^b_{S_n}(X^n)=\langle \mathcal{A}_1,...,\mathcal{A}_e\rangle
	\end{eqnarray}
	with $\mathcal{A}_i\simeq D^b(\mathbb{R},K_i)$and $K_i$ being \'etale $\mathbb{R}$-algebras (see \cite{AB1T}, Proposition 6.1.6). We remark that $K_i\simeq \mathbb{R}^{\times n_i}\times\mathbb{C}^{\times m_i}$. Now \cite{ABT}, Lemma 1.17 implies 
	\begin{eqnarray*}
		D^b(\mathbb{R},K_i)\simeq D^b(\mathbb{R},\mathbb{R})^{\times n_i}\times D^b(\mathbb{R},\mathbb{C})^{\times m_i}. 
	\end{eqnarray*}
	Therefore we get a semiorthogonal decomposition given by
	\begin{eqnarray*}
		D^b_{S_n}(X^n)=\langle \mathcal{G}_i^{(1)},...,\mathcal{G}_i^{(n_i)},\mathcal{F}_i^{(1)},...,\mathcal{F}_i^{(m_i)}\rangle_{i=1,...,e}
	\end{eqnarray*}
	where $\mathrm{End}_{S_n}(\mathcal{G}_i^{(l)})\simeq \mathbb{R}$ and $\mathrm{End}_{S_n}(\mathcal{F}_i^{(l)})\simeq \mathbb{C}$. 
	Now Theorem 6.5 states that $\mathcal{T}$ admits a full weak exceptional collection and its proof in particular shows that the endomorphism algebras of the weak exceptional vector bundles involved are isomorphic to $\mathbb{R},\mathbb{C}$ or $\mathbb{H}$, considered as (simple) $\mathbb{R}$-algebras. Notice that one of the vector bundles occurring in the full weak exceptional collection is $\mathrm{Inf}^{S_n}_{\{\mathrm{id}\}}(\mathcal{V}(\alpha)\otimes V^{(\alpha)}_i)$ of (4). It is indecomposable, since its endomorphism algebra is a central division algebra over $\mathbb{R}$.
	
	Now let $d$ be the number of vector bundles within the set of full weak exceptional collection with endomorphism algebra being isomorphic to $\mathbb{C}$ and $r$ the number of the remaining exceptional vector bundles. We denote the full weak exceptional collection given by Theorem 6.5 simply by
	\begin{eqnarray}
		D^b_{S_n}(X^n)=\{\mathcal{E}_1,\mathcal{E}_2,...,\mathcal{E}_{r+d}\}.
	\end{eqnarray}
	The rank of the Grothendieck group $K_0(\mathcal{T})$ equals $r+d$, i.e. $K_0(\mathcal{T})\simeq \mathbb{Z}^{\oplus (r+d)}$. Note that $r+d=\sum^e_{i=1}{(n_i+m_i)}$. For a $S_n$-equivariant object $\mathcal{V}\in D^b_{S_n}(X^n)$ with $\mathrm{End}_{S_n}(\mathcal{V})\simeq \mathbb{C}$, considered as an $\mathbb{R}$-algebra, we obtain after base change $\mathrm{End}_{S_n}(\mathcal{V}\otimes_{\mathbb{R}}\mathbb{C})\simeq \mathrm{End}_{S_n}(\mathcal{V})\otimes_{\mathbb{R}} \mathbb{C}$ and hence 
	\begin{eqnarray}
		\langle \mathrm{End}_{S_n}(\mathcal{V})\otimes_{\mathbb{R}} \mathbb{C}\rangle\simeq D^b(\mathbb{C},\mathbb{C})\times D^b(\mathbb{C},\mathbb{C}).
	\end{eqnarray}
	For $\mathcal{E}\in D^b_{S_n}(X^n)$, we write $\bar{\mathcal{E}}:=\mathcal{E}\otimes_{\mathbb{R}}\mathbb{C}\in D^b_{S_n}(X^n_{\mathbb{C}})$ for the equivariant object after scalar extension. We obtain semiorthogonal decompositions
	\begin{eqnarray}
		\mathcal{T}':=D^b_{S_n}(X^n_{\mathbb{C}})=\langle \bar{\mathcal{G}}_i^{(1)},...,\bar{\mathcal{G}}_i^{(n_i)},\bar{\mathcal{F}}_i^{(1)},...,\bar{\mathcal{F}}_i^{(m_i)}\rangle_{i=1,...,e}
	\end{eqnarray}
	and 
	\begin{eqnarray}
		\mathcal{T}'=D^b_{S_n}(X^n_{\mathbb{C}})=\langle \bar{\mathcal{E}}_1,\bar{\mathcal{E}}_2,...,\bar{\mathcal{E}}_{r+d}\rangle 
	\end{eqnarray}
	Now (7) and (9) tell us that after base change to $\mathbb{C}$ the Grothendieck group $K_0(\mathcal{T}')$ has rank $r+2d$. The semiorthogonal decomposition (8) then implies $r+2d=\sum^e_{i=1}{(n_i+2m_i)}$. Since $r+d=\sum^e_{i=1}{(n_i+m_i)}$, we find $d=\sum^e_{i=1}{m_i}$. Note that $D^b_{S_n}(X^n)$ admits a unique dg-enhancement, denoted by $_{dg}D^b_{S_n}(X^n)$. In fact this follows from the results in \cite{ET} and the well-known fact that $D^b(X)$ admits a unique dg-enhancement. Alternatively see \cite{BLST}, since $[X^n/S_n]$ is a Deligne--Mumford stack and $D^b([X^n/S_n])\simeq D^b_{S_n}(X^n)$. As explained in (4), there is at least one bundle within the full weak exceptional collection in $D^b_{S_n}(X^n)$ whose endomorphism algebra is isomorphic to the central simple $\mathbb{R}$-algebra corresponding to $X$. The above semiorthogonal decompositions (5) and (6) show that the noncommutative motive $U(_{dg}D^b_{S_n}(X^n))$ decomposes as
	\begin{eqnarray}
		\bigoplus^r_{j=1}U(A_j)\oplus U(\mathbb{C})^{\oplus d}\simeq U(_{dg}D^b_{S_n}(X^n))\simeq \bigoplus^e_{i=1}\left(U(\mathbb{R})^{\oplus n_i}\oplus U(\mathbb{C})^{\oplus m_i}\right),
	\end{eqnarray}
	with $A_j$ being central simple $\mathbb{R}$-algebras. As mentioned before, there exists a $j_0\in\{1,...,r\}$ such that $A_{j_0}$ is the central simple algebra corresponding to $X$. Since $d=\sum^e_{i=1}{m_i}$, Proposition 5.2 implies
	\begin{eqnarray*}
		\bigoplus^r_{j=1}U(A_j)\simeq \bigoplus^e_{i=1}U(\mathbb{R})^{\oplus n_i}.
	\end{eqnarray*}
	Then Theorem 5.1 yields that $X$ is split, i.e. $X\simeq \mathbb{P}^{\mathrm{dim}(X)}$. From the well-known fact that the projective space admits a full exceptional collection we conclude $\mathrm{rdim}(X)=0$.
	
	Now we prove (ii). 
	If $X(\mathbb{R})\neq\emptyset$, \cite{NO2T}, Theorem 6.3 implies $\mathrm{rdim}(X)=0$. Now (i) gives $\mathrm{rdim}(\mathcal{T})=0$. Now, if $\mathrm{rdim}(\mathcal{T})=0$, we conclude from (i) and \cite{NO2T}, Proposition 5.1 that $X$ admits a full exceptional collection. But then \cite{NO2T}, Theorem 6.3 implies $X(\mathbb{R})\neq \emptyset$. In the case $X$ is a twisted quadric associated to a central simple algebra $(A,\sigma)$ with involution of orthogonal type one can use the full weak exceptional collection from \cite{BLUT}. Then repeat the arguments from above to conclude that $\mathrm{rdim}(\mathcal{T})$ implies $\mathrm{rdim}(X)=0$. The other implication is the content of Corollary 6.6. This shows (i). To prove (ii) one can use \cite{NO2T}, Theorem 5.6 and Proposition 6.10.
\end{proof}
\begin{rema}
	\textnormal{The proof of Theorem 6.7 in particular shows that the implication $\mathrm{rdim}(\mathcal{T})=0 \Rightarrow \mathrm{rdim}(X)=0$ holds for arbitrary positive integers $n$. We believe that the other implication does not hold for arbitrary $n$. The proofs of Theorem 6.5, Corollary 6.6 and Theorem 6.7 need $	\mathrm{End}(\mathcal{V}_{\alpha_1})\otimes\cdots\otimes \mathrm{End}(\mathcal{V}_{\alpha_n})\otimes \mathrm{End}( V^{(\alpha)}_i)$ to be isomorphic to matrix algebras over $\mathbb{R}, \mathbb{C}$ or $\mathbb{H}$. To make the proof work over arbitrary fields, we must ensure that any automorophism of $	\mathrm{End}(\mathcal{V}_{\alpha_1})\otimes\cdots\otimes \mathrm{End}(\mathcal{V}_{\alpha_n})\otimes \mathrm{End}( V^{(\alpha)}_i)$ is inner. We do not know if this is indeed true. We also do not know whether Theorem 6.5 for instance holds for twisted flags of type $A_n$.}
\end{rema}
It is worth to mention that if $X$ is split (and admits a full exceptional collection), $D^b_{S_3}(X^3)$ cannot have a full exceptional collection. Indeed, as mentioned in the proof of Corollary 6.6 there is at least one bundle $\mathrm{Inf}^{S_3}_{H_{\alpha}}(\mathcal{E}(\alpha)\otimes V^{(\alpha)}_i)$ within the full weak exceptional collection such that $\mathrm{End}_{S_3}(\mathrm{Inf}^{S_n}_{H_{\alpha}}(\mathcal{E}(\alpha)\otimes V^{(\alpha)}_i))\simeq \mathbb{C}$. The existence of a full exceptional collection in $D^b_{S_3}(X^3)$ would give a decomposition of the noncommutative motive as
\begin{eqnarray*}
	\bigoplus^r_{j=1}U(\mathbb{R})\oplus U(\mathbb{C})^{\oplus d}\simeq U(_{dg}D^b_{S_3}(X^3))\simeq \bigoplus^e_{i=1}U(\mathbb{R}),
\end{eqnarray*}
which is impossible. This follows from \cite{TAT}, Corollary 2.13.
\begin{cor}
	Let $X$ be a Brauer--Severi variety over $\mathbb{R}$ corresponding to $A$ and assume $\mathrm{deg}(A)>3$. Then $S^3(X)(\mathbb{R})\neq \emptyset$ if and only if $\mathrm{rdim}(\mathcal{T})=0$.
\end{cor}
\begin{proof}
	Let $X_n$ be the generalized Brauer--Severi variety associated to $A$. From \cite{KST}, Theorem 1.5 we know that $S^3(X)$ is birational to $X_3\times \mathbb{P}^{6}$. Now $X_n$ admits a $\mathbb{R}$-rational point if and only if $\mathrm{ind}(A)$ divides $n$ (see \cite{KMRT}, Proposition 1.17). Our assumption $S^3(X)(\mathbb{R})\neq \emptyset$ therefore implies $\mathrm{ind}(A)=1$ and hence $X_3$ must be a Grassmannian over $\mathbb{R}$. One can show that this implies $X(\mathbb{R})\neq \emptyset$ and hence $A$ must be split. But then $\mathrm{rdim}(X)=0$ and Theorem 6.7 gives $\mathrm{rdim}(\mathcal{T})=0$. On the other hand, $\mathrm{rdim}(\mathcal{T})=0$ implies $\mathrm{rdim}(X)=0$. From \cite{NO2T}, Proposition 5.1 we conclude that $X$, and so $A$, must be split. Therefore $X_3$ is a Grassmannian and the Lang--Nishimura Theorem provides us with a $\mathbb{R}$-rational point on $S^3(X)$.  
\end{proof}
\begin{prop}
	Let $C$ be a non-split Brauer--Severi curve over the field $\mathbb{R}$ and $2\leq n\leq 3$. We set $\mathcal{T}:=D^b_{S_n}(C^n)$. Then $\mathrm{rdim}(\mathcal{T})=1$.
\end{prop}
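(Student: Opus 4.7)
The plan is to prove the two inequalities $\mathrm{rdim}(\mathcal{T})\leq 1$ and $\mathrm{rdim}(\mathcal{T})\neq 0$ separately.

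For the upper bound, I would apply Theorem 6.5 to $\mathcal{T}=D^b_{S_n}(C^n)$ and use the structural information supplied by its proof. The full weak exceptional collection there decomposes $\mathcal{T}$ into blocks $\langle T(\alpha,i)_j\rangle\simeq D^b(\mathbb{R},D_j)$, where $D_j:=\mathrm{End}_{S_n}(T(\alpha,i)_j)$ is a finite-dimensional $\mathbb{R}$-division algebra and hence, by the theorem of Frobenius, one of $\mathbb{R}$, $\mathbb{C}$, or $\mathbb{H}$. Blocks with $D_j\in\{\mathbb{R},\mathbb{C}\}$ are representable in dimension $0$ by Proposition 6.1.6 of \cite{AB1}. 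Blocks with $D_j=\mathbb{H}$ can be embedded admissibly in $D^b(C)$: by the collection (1) of Theorem 6.5 applied with $r=1$ one has $D^b(C)=\langle\mathcal{O}_C,\mathcal{V}_1\rangle$ with $\mathrm{End}(\mathcal{V}_1)\simeq\mathbb{H}$, so $D^b(\mathbb{R},\mathbb{H})\simeq\langle\mathcal{V}_1\rangle\subset D^b(C)$ is representable in dimension $\dim(C)=1$. Taking the supremum over all blocks yields $\mathrm{rdim}(\mathcal{T})\leq 1$.

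For the lower bound, I would argue by contradiction, transposing the noncommutative motives strategy from the proof of Theorem 6.7, whose applicability for arbitrary $n$ was flagged in Remark 6.8. Suppose $\mathrm{rdim}(\mathcal{T})=0$. The key input is that the collection of Theorem 6.5 contains at least one weak exceptional object whose $S_n$-equivariant endomorphism algebra equals $\mathrm{End}(\mathcal{V}_1)\simeq\mathbb{H}$: for $n=2$, the multi-index $\alpha=(0,1)$ has trivial stabilizer and the formula from the proof of Theorem 6.5 gives $\mathrm{End}_{S_2}\simeq\mathbb{R}\otimes\mathbb{H}\otimes\mathbb{R}=\mathbb{H}$; for $n=3$ one takes $\alpha=(0,0,1)$, whose stabilizer $S_2$ acts trivially on $\mathrm{End}(\mathcal{V}_0)\otimes\mathrm{End}(\mathcal{V}_0)\otimes\mathrm{End}(\mathcal{V}_1)\otimes\mathrm{End}(V^{(\alpha)}_i)\simeq\mathbb{H}$, so the inflated bundle has $S_3$-equivariant endomorphism algebra $\mathbb{H}$. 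Comparing the noncommutative motive of the canonical dg-enhancement ${}_{dg}\mathcal{T}$ in two ways---via Theorem 6.5 and via \cite{AB1}, Proposition 6.1.6 applied to the hypothesized $\mathrm{rdim}=0$ decomposition---produces an isomorphism
\begin{eqnarray*}
\bigoplus_{j=1}^{r}U(A_j)\oplus U(\mathbb{C})^{\oplus d}\simeq U({}_{dg}\mathcal{T})\simeq \bigoplus_{i=1}^{e}\left(U(\mathbb{R})^{\oplus n_i}\oplus U(\mathbb{C})^{\oplus m_i}\right),
\end{eqnarray*}
with at least one $A_{j_0}\simeq\mathbb{H}$. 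Comparing ranks of $K_0$ before and after extension of scalars to $\mathbb{C}$ forces $d=\sum_i m_i$; Proposition 5.2 cancels the $U(\mathbb{C})^{\oplus d}$ summand and Theorem 5.1 then forces $[A_j]=[\mathbb{R}]$ in $\mathrm{Br}(\mathbb{R})$ for every $j$, contradicting $[A_{j_0}]=[\mathbb{H}]$. Hence $\mathrm{rdim}(\mathcal{T})\geq 1$.

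Combining the two bounds gives $\mathrm{rdim}(\mathcal{T})=1$. The principal obstacle is confirming that the noncommutative motives cancellation argument of Theorem 6.7 transfers unchanged to the case $\dim(X)=1$; this reduces to exhibiting a weak exceptional object in the full weak exceptional collection of Theorem 6.5 whose endomorphism algebra recovers the Brauer class of the non-split curve $C$, a role played by the explicit multi-indices $\alpha=(0,1)$ and $\alpha=(0,0,1)$ above.
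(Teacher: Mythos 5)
Your proof is correct. The upper bound is essentially the paper's argument: both use Theorem~6.5 to get a full weak exceptional collection whose pieces have endomorphism algebra $\mathbb{R}$, $\mathbb{C}$, or $\mathbb{H}$, and then use that $\mathrm{rdim}(D^b(\mathbb{R}))=\mathrm{rdim}(D^b(\mathbb{C}))=0$ and $\mathrm{rdim}(D^b(\mathbb{H}))=1$. The only difference is that you exhibit the embedding $\langle\mathcal{V}_1\rangle\subset D^b(C)$ by hand, whereas the paper simply cites \cite{AB1}, Proposition~6.1.10 for $\mathrm{rdim}(D^b(\mathbb{H}))=1$; the content is the same.

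Where you genuinely diverge from the paper is in the lower bound. The paper's proof of Proposition~6.10 invokes Theorem~6.7 as a black box, and the paper's proof of Theorem~6.7 in the case $\dim(X)=1$ does \emph{not} use noncommutative motives: it argues that $S^n(X)$ is itself a Brauer--Severi variety (implicitly using the identification $D^b_{S_n}(C^n)\simeq D^b(S^n(C))$ for a smooth curve $C$) and then applies \cite{NO2}, Theorem~6.3. You instead transplant the noncommutative motives cancellation argument from the $\dim(X)>1$ branch of Theorem~6.7 directly to the curve case, by verifying the one missing hypothesis there: the existence of a weak exceptional object in the Theorem~6.5 collection with $S_n$-equivariant endomorphism algebra $\mathbb{H}$. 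Your explicit multi-indices $\alpha=(0,1)$ (for $n=2$, trivial stabilizer) and $\alpha=(0,0,1)$ (for $n=3$, stabilizer $S_2$ acting trivially on $\mathrm{End}(\mathcal{V}_0)^{\otimes 2}\otimes\mathrm{End}(\mathcal{V}_1)\otimes\mathrm{End}(V_i^{(\alpha)})\simeq\mathbb{H}$) are correct and suffice. This route is slightly longer but buys you something: it bypasses the need for the equivalence $D^b_{S_n}(C^n)\simeq D^b(S^n(C))$, which the paper leaves implicit, and it directly substantiates the claim of Remark~6.8 in the curve case.
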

\begin{proof}
	This follows from Theorem 6.5 as the endomorphism algebras of the weak exceptional vector bundles involved are isomorphic to either $\mathbb{R}, \mathbb{C}$ or $\mathbb{H}$. Since $\mathrm{rdim}(D^b(\mathbb{C}))=\mathrm{rdim}(D^b(\mathbb{R}))=0$ according to \cite{AB1T}, Proposition 6.1.6 and $\mathrm{rdim}(D^b(\mathbb{H}))=1$ according to \cite{AB1T}, Proposition 6.1.10, we obtain $\mathrm{rdim}(\mathcal{T})\leq1$. Notice that $\mathrm{rdim}(\mathcal{T})=0$ would imply $\mathrm{rdim}(C)=0$ by Theorem 6.7 which is impossible for $C$ being non-split.   
\end{proof}




\begin{rema}
	\textnormal{The implication in Proposition 6.10 is not an equivalence. Take for instance a non-split Brauer--Severi curve $C$ over $\mathbb{R}$. Then $S^2(C)$ is birational to $\mathbb{P}^2$. Therefore, $S^2(C)(\mathbb{R})\neq \emptyset$, whereas $\mathrm{rdim}(D^b_{S_2}(C^2))=1$.}
\end{rema}
\noindent
We recall Orlov's definition of a noncommutative $k$-scheme \cite{ORLT}.
\begin{defi}
	\textnormal{A \emph{noncommutative scheme} over $k$ is a pretriangulated dg category $\mathcal{A}$ of the form $\mathrm{perf}(E)$ for some cohomologically bounded dg $k$-algebra $E$. A noncommutative $k$-scheme $\mathcal{A}$ is called \emph{geometric} if there is a smooth and projective $k$-scheme $X$ such that $\mathcal{A}$ is an admissible subcategory of $\mathrm{perf}(X)$.}  
\end{defi}
\noindent
For noncommutative schemes one can define \emph{noncommutative resolution of singularities}. We recall the definition given in \cite{MBTT}.
\begin{defi}
	\textnormal{Let $\mathcal{A}$ be a geometric noncommutative $k$-scheme. A \emph{noncommutative resolution} of $\mathcal{A}$ is a smooth noncommutative $k$-scheme $\mathcal{B}$ with a functor $\Phi\colon \mathcal{A}\rightarrow \mathcal{B}$ inducing a fully faithful functor $H^0(\mathcal{A})\rightarrow H^0(\mathcal{B})$. If $\mathcal{A}=\mathrm{perf}(X)$ for some scheme $X$, we say that $\mathcal{B}$ is a noncommutative resolution of $X$.}
\end{defi}
\noindent
There is also another definition, called categorical resolution of singularities, which slightly differs from the definition given above and which was introduced by Lunts and Kuznetsov \cite{LKT}. Recall the following facts on Drinfeld and Verdier quotients. If $\mathcal{A}$ is an abelian category, we write $C_{ac}(\mathcal{A})$ for the full subcategory of $C(\mathcal{A})$ consisting of acyclic objects. The Verdier quotient $D(\mathcal{A})=[C(\mathcal{A})]/[C_{ac}(\mathcal{A})]$ is the derived category of $\mathcal{A}$. According to \cite{LKT}, Theorem 3.8, one has $[C(\mathcal{A})]/[C_{ac}(\mathcal{A})]\simeq [C(\mathcal{A})/C_{ac}(\mathcal{A})]$ and hence $D(\mathcal{A})\simeq[C(\mathcal{A})/C_{ac}(\mathcal{A})]$. For a separated noetherian scheme $X$ over an arbitrary field $k$, let $\mathrm{Qcoh}(X)$ be the abelian category of quasi-coherent sheaves. Denote by $\mathrm{com}(X)$ the category of unbounded complexes and by $\mathrm{com}_{ac}(X)$ the subcategory of all acyclic complexes. Note that there are enough $h\text{-}flat$ complexes in $\mathrm{com}(X)$ for any separated quasi-compact scheme. Hence the quotient $h\text{-}flat\text{-}\mathrm{com}(X)/h\text{-}flat\text{-}\mathrm{com}_{ac}(X)$ is a dg enhancement of $D(\mathrm{Qcoh}(X))$ (see \cite{ORLT}). Let us denote by $\mathcal{P}erf \text{-}X$ the full dg subcategory of $h\text{-}flat\text{-}\mathrm{com}(X)/h\text{-}flat\text{-}\mathrm{com}_{ac}(X)$ consisting of all perfect complexes. The next theorem shows that if $G$ is a finite group of automorphisms acting on a smooth projective $k$-variety $X$, the dg enhancement $\mathcal{P}erf_G\text{-}X$ is a noncommutative resolution of the quotient variety $Y:=X/G$. That $D^b_G(X)$ is a categorical resolution of singularities in the sense of \cite{LKT} is proved in \cite{RAT}.
\begin{thm}
	Let $X$ be a smooth projective $k$-variety and $G\subset \mathrm{Aut}(X)$ a finite group acting on $X$. Then $\mathcal{P}erf_G\text{-}X$ is a noncommutative resolution of the quotient variety $Y=X/G$.  
\end{thm}
\begin{proof}
	Recall that for any morphism $f\colon X\rightarrow Y$ the componentwise pullback $f^*$ preserves $h\text{-}flat$ complexes and $h\text{-}flat$ acyclic complexes (see \cite{LKT}, Lemma 3.10). Thus we obtain a dg-functor 
	\begin{eqnarray*}
		f^*\colon h\text{-}flat\text{-}\mathrm{com}(Y)/h\text{-}flat\text{-}\mathrm{com}_{ac}(Y)\longrightarrow h\text{-}flat\text{-}\mathrm{com}(X)/h\text{-}flat\text{-}\mathrm{com}_{ac}(X), 
	\end{eqnarray*}
	inducing the derived pullback $\mathbb{L}f^*$ between the derived categories. The dg-functor also induces a dg-functor 
	\begin{eqnarray*}
		f^*\colon \mathcal{P}erf \text{-}Y\longrightarrow \mathcal{P}erf\text{-}X.
	\end{eqnarray*}
	For $X$ a smooth projective $k$-variety, we have $[\mathcal{P}erf\text{-}X]\simeq D^b(X)$. Now let $G$ be a finite group acting on $X$. For $\mathcal{A}=\mathrm{Qcoh}_G(X)$ we denote by $\mathcal{P}erf_G\text{-}X$ the subcategory of $C(\mathcal{A})/C_{ac}(\mathcal{A})$ consisting of equivariant perfect complexes. It is easy to see that $[\mathcal{P}erf_G\text{-}X]\simeq D^b_G(X)$. We see that $\mathcal{P}erf_G\text{-}X$ is a dg enhancement of $D^b_G(X)$. Note that $_{dg}D^b_{G}(X)\simeq \mathcal{P}erf_G\text{-}X$. 	
	The quotient map $\pi\colon X\rightarrow X/G$ induces a dg-functor
	\begin{eqnarray*}
		\pi^*\colon \mathcal{P}erf\text{-}X/G\longrightarrow \mathcal{P}erf_G\text{-}X.
	\end{eqnarray*}
	Therefore, we obtain the derived functor 
	\begin{eqnarray*}
		\mathbb{L}\pi^*\colon \mathrm{perf}(X/G)\longrightarrow D^b_G(X)=\mathrm{perf}_G(X).
	\end{eqnarray*}
	
	From \cite{SCHT}, we conclude that $\mathcal{P}erf_G\text{-}X$ is smooth and geometric. Denote by $\pi\colon X\rightarrow X/G$ the quotient map. Then there is a functor
	\begin{eqnarray*}
		\pi_*^G\colon D_G(X)\longrightarrow D(Y),
	\end{eqnarray*}
	where $\mathcal{F}\mapsto \mathcal{F}^G$ is the functor of $G$-invariants. The left adjoint to $\pi_*^G$ is $\mathbb{L}\pi ^*$, where the $G$-action on the object $\mathbb{L}\pi ^*\mathcal{F}$ in $D(X)$ is the trivial one. From \cite{PEST}, we have $\pi_*^G\mathcal{O}_X=\mathcal{O}_Y$ and projection formula implies 
	\begin{eqnarray*}
		\pi_*^G\mathbb{L}\pi^*\simeq \mathrm{id}.
	\end{eqnarray*}
	Again by \cite{PEST}, $\omega_X=\pi^*\omega_Y$. Thus by Grothendieck duality, we find
	\begin{eqnarray*}
		\mathrm{Hom}_{D^b(Y)}(\pi_*\mathcal{F},\mathcal{G})=\mathrm{Hom}_{D^b(X)}(\mathcal{F},\mathbb{L}\pi^*\mathcal{G}),
	\end{eqnarray*}
	for all $\mathcal{F}\in D^b(X)$ and all $\mathcal{G}\in\mathrm{perf}(Y)$. As a consequence, for any $\mathcal{F}\in D^b(X)$, we have 
	\begin{eqnarray*}
		\pi_*\mathbb{R}\mathcal{H}om(\mathcal{F},\mathcal{O}_X)=\mathbb{R}\mathcal{H}om(\pi_*\mathcal{F},\mathcal{O}_Y).
	\end{eqnarray*}
	If $\mathcal{F}$ is assumed to be $G$-equivariant, we furthermore have 
	\begin{eqnarray*}
		\mathbb{R}\mathcal{H}om(\pi_*\mathcal{F},\mathcal{O}_Y)^G=\mathbb{R}\mathcal{H}om(\pi_*^G\mathcal{F},\mathcal{O}_Y).
	\end{eqnarray*}
	From the above isomorphisms, we finally obtain (see \cite{RAT}, p.684):
	\begin{eqnarray*}
		\mathrm{Hom}_{D^b(Y)}(\pi_*^G\mathcal{F},\mathcal{G})\simeq \mathrm{Hom}_{D^b_G(X)}(\mathcal{F},\mathbb{L}\pi^*\mathcal{G}).
	\end{eqnarray*}
	Hence $\mathbb{L}\pi^*$ is right adjoint to $\pi_*^G$ and well defined on $\mathrm{perf}(Y)$. And since $\pi$ is proper and finite and $Y$ is locally noetherian, we conclude $\pi^G_{*}D^b_G(X)\subset \mathrm{perf}(Y)$. Moreover, $\mathrm{perf}(Y)\subset D^b(Y)$ is a full subcategory. Therefore,
	\begin{eqnarray*}
		\mathrm{Hom}_{\mathrm{perf}(Y)}(\pi_*^G\mathcal{F},\mathcal{G})\simeq \mathrm{Hom}_{D^b_G(X)}(\mathcal{F},\mathbb{L}\pi^*\mathcal{G}).
	\end{eqnarray*}
	Note that this implies that $\mathbb{L}\pi_{*}$ is fully faithfull, showing that $\mathcal{P}erf_G\text{-}X$ is a noncommutative resolution of $Y$. 
\end{proof}	
\begin{cor}
	Let $X$ be a Brauer--Severi variety over $\mathbb{R}$ corresponding to $A$ and assume $\mathrm{deg}(A)>3$. If $S^3(X)$ is $\mathbb{R}$-rational, we have $\mathrm{rcodim}(S^3(X))\geq 2$.
\end{cor}

\addcontentsline{toc}{section}{References}

{\small MATHEMATISCHES INSTITUT, HEINRICH--HEINE--UNIVERSIT\"AT 40225 D\"USSELDORF, GERMANY}\\
E-mail adress: novakovic@math.uni-duesseldorf.de

\end{document}